\documentclass[11pt,a4paper]{article}
\usepackage[T1]{fontenc}
\usepackage{lmodern}
\usepackage{times}
 \usepackage[pagebackref]{hyperref}
\hypersetup{backref, pagebackref, colorlinks=true}
\usepackage[margin=28mm]{geometry}
\usepackage{amsmath,amssymb,amsthm}
\usepackage{microtype}
 
\numberwithin{equation}{section}
\newtheorem{theorem}{Theorem}[section]
\newtheorem{lemma}[theorem]{Lemma}

\newtheorem{corollary}[theorem]{Corollary}
\newtheorem{definition}[theorem]{Definition}

\theoremstyle{remark}

\newcommand{\Z}{\mathbb Z}

\newcommand{\C}{\mathbb C}
\newcommand{\F}{\mathbb F}

\newcommand{\Hh}{\mathbb H}

\newcommand{\Mmod}{\mathcal M}

\newcommand{\red}[1]{\overline{#1}}
\title{Nonexistence of Constant Congruences\texorpdfstring{\\}{ }for the Cumulative Partition Function}
\author{Kathy Q. Ji}
\date{Center for Applied Mathematics\\[2pt]
Tianjin University\\[2pt]
Tianjin 300072, P.R. China\\[5pt]
Email: kathyji@tju.edu.cn
}

\begin{document}
\maketitle
\vspace{-2.5em}

\begin{abstract}
Let $p(n)$ be the ordinary partition function and let
$c(n)=\sum_{k=0}^{n-1}p(k)$, with $c(0)=0$. We prove that, for every $a\geq1$, $b\geq0$,
and $m\geq2$, the sequence $c(an+b)$ is not eventually constant modulo $m$, thereby answering a question posed by Chen.
The proof combines the algebraicity over $\mathbb C(j)$
of modular functions for finite-index subgroups of
$\mathrm{SL}_2(\mathbb Z)$ with Voloch's theorem on the
transcendence of elliptic modular functions in positive
characteristic.
\end{abstract}

\section{Introduction and the main result}

The ordinary partition function $p(n)$ counts the ways of
expressing $n$ as an unordered sum of positive integers.
We adopt the conventions $p(0)=1$ and $p(n)=0$ for $n<0$.
Its generating function is
\begin{equation}\label{eq:partition}
 P(x):=\sum_{n\geq0}p(n)x^n
 =\prod_{n\geq1}(1-x^n)^{-1}.
\end{equation}
For background on partition theory, see
Andrews~\cite{Andrews1998}.

We consider the cumulative sum
\begin{equation}\label{eq:cumulative}
 c(n):=\sum_{k=0}^{n-1}p(k)
 \qquad(n\geq0),
\end{equation}
where an empty sum is understood to be zero, so that $c(0)=0$.
Thus $c(n)$ counts the partitions of all nonnegative integers
strictly less than $n$, including the empty partition when $n\geq1$.
Cumulative sums of the partition function were studied by
Gupta~\cite{Gupta1946}.
Following the terminology of Chen~\cite{Chen2026},
we refer to $c(n)$ as the \emph{cumulative partition function}.

The first few values are
\[
\begin{array}{c|rrrrrrrrrrr}
n    &0&1&2&3&4&5&6&7&8&9&10\\ \hline
p(n) &1&1&2&3&5&7&11&15&22&30&42\\
c(n) &0&1&2&4&7&12&19&30&45&67&97
\end{array}
\]
With the normalization in \eqref{eq:cumulative},
the generating function of $c(n)$ is
\begin{equation}\label{eq:cumulative-gf}
 C(x):=\sum_{n\geq0}c(n)x^n
 =\sum_{k\geq0}p(k)\frac{x^{k+1}}{1-x}
 =\frac{xP(x)}{1-x}.
\end{equation}

Chen~\cite{Chen2026} asked whether the cumulative partition
function can satisfy a congruence with a constant residue
along an arithmetic progression.
Our main result shows that no such congruence can hold,
even after finitely many initial terms are omitted.

\begin{theorem}\label{thm:main}
Let $a\geq1$, $b\geq0$, and $m\geq2$ be integers.
There do not exist integers $d$ and $n_0\geq0$ such that
\begin{equation}\label{eq:forbidden}
 c(an+b)\equiv d\pmod m
 \qquad\text{for every }n\geq n_0.
\end{equation}
\end{theorem}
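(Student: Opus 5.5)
Assume for contradiction that \eqref{eq:forbidden} holds. Fix a prime $\ell\mid m$; then $c(an+b)\equiv d\pmod \ell$ for all $n\geq n_0$. The plan is to show that this forces $P(x)$, reduced modulo $\ell$, to be algebraic over $\F_\ell(j)$ in a way that contradicts Voloch's theorem. We use the two inputs named in the abstract: modular functions on finite-index subgroups are algebraic over $\C(j)$, and Voloch's theorem on transcendence in positive characteristic.

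\textbf{Step 1 (from the congruence to a dissection of $P$ mod $\ell$).} From \eqref{eq:cumulative-gf}, $(1-x)C(x)=xP(x)$, so $p(k)=c(k+2)-c(k+1)$ for $k\ge0$. The hypothesis says that the $a$-dissection component
\[
C_{b}(x):=\sum_{n\equiv b\ (a)} c(n)x^n
\]
is congruent mod $\ell$ to a rational function in $\F_\ell(x)$: a polynomial plus $d\,x^{an_0+b}/(1-x^a)$. Write $\zeta=e^{2\pi i/a}$. Then
\[
C_b(x)=\frac1a\sum_{r=0}^{a-1}\zeta^{-rb}\,C(\zeta^r x),
\]
and $C(\zeta^r x)=\zeta^r x P(\zeta^r x)/(1-\zeta^r x)$. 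If $\ell\nmid a$, reduction modulo a prime above $\ell$ in $\Z[\zeta,1/a]$ is legitimate, and we get a nontrivial $\overline{\F}_\ell(x)$-linear relation
\[
\sum_{r}\zeta^{-rb}\frac{\zeta^r x}{1-\zeta^r x}\,P(\zeta^r x)\equiv R(x).
\]
If $\ell\mid a$, we first replace $a$ by a multiple and use the Frobenius identity $P(x)^{\ell}\equiv P(x^\ell)$ to strip $\ell$-power factors; this bookkeeping needs care. Alternatively, we work directly with the full sum of $c(n)x^n$ over $n\equiv b \pmod{a}$, via the operator $U$ and the twists.

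\textbf{Step 2 (modularity).} With $x=q$, $P(q)=q^{1/24}/\eta(\tau)$, and each $P(\zeta^r q)$ is $1/\eta(\tau+r/a)$ up to a root of unity and $q^{1/24}$. The quotients $\eta(\tau)/\eta(\tau+r/a)$ are modular functions for a finite-index subgroup (some $\Gamma(24a)$), hence algebraic over $\C(j)$, and their $q$-expansions have coefficients integral over $\Z[\zeta]$. Dividing the relation of Step 1 by $P(q)$ therefore yields
\[
\frac{1}{P(q)}\equiv \frac{1}{R(q)}\sum_r \alpha_r(q)\,g_r(q)\pmod{\lambda},
\]
where the $g_r$ are algebraic over $\C(j)$ with integral expansions and the $\alpha_r$ are rational. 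Consequently $1/P=\prod(1-q^n)$ mod $\lambda$ is algebraic over $\overline{\F}_\ell(q,j)$. The integral-model statement, namely that the minimal polynomial over $\Z[\zeta][j]$ reduces well, will be justified by integrality of $q$-expansions together with a Gauss-lemma normalization.

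\textbf{Step 3 (contradiction via Voloch).} Note that $\Delta=q\prod(1-q^n)^{24}$ and $j$ are algebraically related over $\F_\ell(q)$. Voloch's theorem states that $j$ is transcendental over $\overline{\F}_\ell(q)$ in characteristic $\ell$. Given that $\eta$-type products are algebraic over $\F_\ell(j)$ yet $j$ is transcendental over $\overline{\F}_\ell(q)$, the conclusion that $\prod(1-q^n)$ is algebraic over $\overline{\F}_\ell(q)$ is what must be contradicted. Algebraicity of $\prod(1-q^n)$ over $\overline{\F}_\ell(q,j)$ alone is not contradictory. So in Step 2 we must avoid $j$ surviving. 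The cleaner route is to show directly that $P\bmod\ell$ would be algebraic over $\overline{\F}_\ell(q)$, forcing $\Delta$ and hence, via $j=E_4^3/\Delta$ with $E_4$ expressed through $\Delta$-quotients, forcing $j$ algebraic over $\overline{\F}_\ell(q)$, which contradicts Voloch. Arranging that the Step 1 relation isolates $P$ with coefficients algebraic over $\overline{\F}_\ell(q)$, and not merely over $\overline{\F}_\ell(q,j)$, is the main obstacle. I would first try showing each ratio $\eta(\tau)/\eta(\tau+r/a)$ is itself algebraic over $\overline{\F}_\ell(q)$ modulo $\lambda$ by an $\ell$-adic/Frobenius argument on the product formula.
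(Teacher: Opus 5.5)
Your argument does not close, and the repair you propose in Step~3 cannot work.

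You rightly note that algebraicity of $\overline P$ over $\overline{\F}_\ell(q,\overline j)$ yields nothing; over $\overline{\F}_\ell(q,\overline j,\overline\Delta)$ it is automatic, since $\overline P^{\,24}=q/\overline\Delta$. But you then try to eliminate $j$ by showing that the reduction $\overline g_r$ of $\eta(\tau)/\eta(\tau+r/a)$ is algebraic over $\overline{\F}_\ell(q)$. For $\ell\nmid a$ and $r\not\equiv0\pmod a$ this is false:
\begin{itemize}
\item The coefficient of $q$ in $\overline g_r$ is a unit times $\overline\zeta^{\,r}-1\neq0$, so $\overline g_r$ is transcendental over $\overline{\F}_\ell$.
\item Since $j$ is algebraic over $\C(g_r)$, reducing a $\lambda$-primitive integral relation between $g_r$ and $j$ shows that $\overline j$ is algebraic over $\overline{\F}_\ell(\overline g_r)$.
\item So $\overline g_r$ algebraic over $\overline{\F}_\ell(q)$ would make $\overline j$ algebraic over $\overline{\F}_\ell(q)$, contradicting the very theorem of Voloch you want to apply.
\end{itemize}
You have the roles of $q$ and $j$ inverted. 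The contradiction must come from $q$ being algebraic over a field of reduced modular quantities, not from $P$ being algebraic over $\overline{\F}_\ell(q)$.

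Step~1 is also incomplete for $\ell\mid a$. The factor $a$ in $aC_b=\sum_r\zeta^{-rb}C(\zeta^r x)$ vanishes modulo any prime above $\ell$, so reducing the twist identity loses the hypothesis. ``Replace $a$ by a multiple and use Frobenius'' does not remedy this.

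The missing idea, which is the paper's route, is to keep the modular objects as coefficients and regard $q$ as the unknown.
\begin{itemize}
\item Put $F_r(q)=\sum_{n\equiv r\,(\mathrm{mod}\,a)}p(n)q^{24n-1}$ and let $r_i\equiv b-i\pmod a$.
\item The hypothesis makes $\mathcal S_{a,b}\bigl((x+\cdots+x^a)P(x)\bigr)$ a polynomial $R(x)$ mod $\ell$. At $x=q^{24}$ this reads $R(q^{24})=\sum_{i=1}^a q^{24i+1}\overline{F_{r_i}}(q)$.
\item In characteristic $0$, $\eta(\tau)F_r$ is a $\C$-linear combination of the weight-$0$ quotients $\eta(\tau)/\eta(24\tau+t/a)$. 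Hence $H_r=\Delta F_r^{24}$ is algebraic over $\C(j)$.
\item Since $H_r\in\Z((q))$, after descending from $\C$ to $\Q$ a primitive relation over $\Z$ reduces mod $\ell$. This gives $\overline{F_r}$ algebraic over $K_\ell=\F_\ell(\overline j,\overline\Delta)$. The roots of unity and the factor $1/(24a)$ are never reduced, so $\ell\mid a$ causes no trouble.
\item Then $q$ is a root of $\sum_i\overline{F_{r_i}}X^{24i+1}-R(X^{24})$ over a finite extension of $K_\ell$. This polynomial is nonzero: the exponents $24i+1$ are distinct and $\not\equiv0\pmod{24}$, and the coefficient for $i\equiv b$ is $\overline{F_0}=q^{-1}+\cdots$.
\item Since $K_\ell\subseteq\overline{\F}_\ell(\overline{A_4},\overline{A_6})$, Voloch's theorem gives the contradiction.
\end{itemize}

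In fact, for $\ell\nmid a$ your own Step~2 identity $R(q)\prod_n(1-q^n)=\sum_r\alpha_r(q)g_r(q)$ already has this shape. Read it as a polynomial equation for $q^{1/24}$ with coefficients in $\overline{\F}_\ell(\overline g_r,\overline\eta)$, where $\overline\eta^{\,24}=\overline\Delta$; this field is algebraic over $\overline{\F}_\ell(\overline j,\overline\Delta)$. You would then still need to check that this polynomial is nonzero, including the case $R\equiv0$, where your division by $R$ breaks down.
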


The theorem has the following immediate consequences.

\begin{corollary}\label{cor:parity}
For all integers $a\geq1$ and $b\geq0$, there are infinitely
many integers $n\geq0$ for which $c(an+b)$ is even and
infinitely many for which it is odd.
\end{corollary}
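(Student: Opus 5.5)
The plan is to specialize Theorem~\ref{thm:main} to the modulus $m=2$ and argue by contradiction, treating the two parity classes separately. Fix integers $a\geq1$ and $b\geq0$.

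For the first assertion, suppose that only finitely many integers $n\geq0$ make $c(an+b)$ even. Then there is an $n_0\geq0$ beyond all of them, and for every $n\geq n_0$ the value $c(an+b)$ is odd. In other words,
\[
 c(an+b)\equiv 1\pmod 2\qquad\text{for every }n\geq n_0.
\]
This is exactly a congruence of the form \eqref{eq:forbidden} with $m=2$ and $d=1$, which Theorem~\ref{thm:main} forbids. Hence infinitely many $n$ give $c(an+b)$ even.

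The second assertion is symmetric. If only finitely many $n$ gave $c(an+b)$ odd, we would have $c(an+b)\equiv 0\pmod 2$ for all sufficiently large $n$. This is again \eqref{eq:forbidden}, now with $m=2$ and $d=0$, and is excluded by the theorem. Therefore infinitely many $n$ give $c(an+b)$ odd.

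There is no real obstacle here. All the analytic and transcendence input sits inside Theorem~\ref{thm:main}. The only point to check is that ``finitely many exceptions'' translates into ``eventually constant modulo $2$'', and this holds because each value has only two possible residues.
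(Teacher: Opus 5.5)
Your proof is correct and follows the paper's argument: if one parity occurred only finitely often, $c(an+b)$ would be eventually constant modulo $2$ (with $d=1$ or $d=0$), which Theorem~\ref{thm:main} rules out. The paper handles both parities in a single sentence, whereas you split them into two symmetric cases.
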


\begin{proof}
If either parity occurred only finitely often, then
$c(an+b)$ would have the other parity for all sufficiently
large $n$. The sequence would therefore be eventually
constant modulo $2$, contrary to Theorem~\ref{thm:main}.
\end{proof}

\begin{corollary}\label{cor:periodicity}
For all integers $a\geq1$, $b\geq0$, and $m\geq2$,
the sequence $(c(an+b))_{n\geq0}$ is not eventually
periodic modulo $m$.
\end{corollary}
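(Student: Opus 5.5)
The statement to prove is Corollary~\ref{cor:periodicity}. The plan is to reduce it to Theorem~\ref{thm:main} by passing to a finer arithmetic progression.

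Suppose, for contradiction, that $(c(an+b))_{n\ge0}$ is eventually periodic modulo $m$. Then there exist a period $T\ge1$ and a threshold $N\ge0$ such that
\[
c(a(n+T)+b)\equiv c(an+b)\pmod m \qquad\text{for all } n\ge N .
\]
Fix any residue $r$ with $N\le r<N+T$. Set $d:=c(ar+b)$, and consider the subsequence indexed by $n=Tk+r$ for $k\ge0$.

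Iterating the periodicity relation $k$ times gives
\[
c\bigl(a(Tk+r)+b\bigr)\equiv c(ar+b)=d \pmod m \qquad\text{for every } k\ge 0 .
\]
Now write $a':=aT\ge1$ and $b':=ar+b\ge0$. The display becomes
\[
c(a'k+b')\equiv d\pmod m \qquad\text{for all } k\ge0 .
\]
This is exactly a congruence of the form \eqref{eq:forbidden} with $n_0=0$. Theorem~\ref{thm:main} rules out such a congruence, which is the desired contradiction.

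There is no real obstacle here. The only points to check are that the new parameters are admissible: $a'\ge1$ holds because $a,T\ge1$, and $b'\ge0$ holds because $a\ge1$, $r\ge0$ and $b\ge0$. The periodicity relation may be iterated because every index $Tj+r$ with $j\ge0$ satisfies $Tj+r\ge r\ge N$, so each step lies in the range where the relation is valid.
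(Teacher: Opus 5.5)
Your proposal is correct and matches the paper's proof: iterate the periodicity relation along $n=Tk+r$ to obtain a constant congruence with step $aT$ and initial term $ar+b$, then invoke Theorem~\ref{thm:main}. The only cosmetic difference is that the paper takes $r=N$, whereas you allow any $r$ with $N\le r<N+T$.
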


\begin{proof}
Suppose, to the contrary, that the sequence is eventually
periodic modulo $m$.
Then there exist integers $t\geq1$ and $N\geq0$ such that
\[
 c(a(n+t)+b)\equiv c(an+b)\pmod m
 \qquad\text{for every }n\geq N.
\]
Iterating this congruence gives
\[
 c(atk+aN+b)\equiv c(aN+b)\pmod m
 \qquad\text{for every }k\geq0.
\]
This is a constant congruence along an arithmetic progression
with step $at\geq1$ and initial term $aN+b\geq0$,
contradicting Theorem~\ref{thm:main}.
\end{proof}

 The proof of our main result relies on Voloch's theorem on the
transcendence of elliptic modular functions in characteristic $p$.
In Section~2, we review the necessary background on modular functions
for finite-index subgroups of $\mathrm{SL}_2(\mathbb Z)$, together
with basic facts about algebraic extensions and transcendence over
fields. Section~3 establishes the auxiliary results needed for the
proof of Theorem~\ref{thm:main}, and Section~4 is devoted to its proof.

\section{Preliminaries}\label{sec:algebraicity}

In this section, we review the necessary background on modular functions
for finite-index subgroups of $\mathrm{SL}_2(\mathbb Z)$, together with
basic facts about algebraic extensions and transcendence over fields.

Throughout,  $\ell$ denotes a prime.
All Laurent series below are understood formally; no convergence
condition is imposed. We use the following notation.

\begin{enumerate}
\item  $\mathbb C[X]$ is the ring of polynomials in an indeterminate
$X$ with complex coefficients. Replacing $\mathbb Z$ by a field $k$ gives the the ring of polynomials  $k[X]$. In particular, we shall use $\mathbb C[X]$
and $\mathbb F_\ell[X]$, where $\mathbb F_\ell=\mathbb Z/\ell\mathbb Z$. 

\item
For independent indeterminates $U$ and $V$, the ring
$\mathbb Z[U,V]$ consists of all polynomials
\[
A(U,V)=\sum_{i,j\geq0}a_{ij}U^iV^j,
\qquad a_{ij}\in\mathbb Z,
\]
with only finitely many nonzero coefficients.
The exponents of $U$ and $V$ are nonnegative integers.
A nonzero polynomial in $\mathbb Z[U,V]$ is called
\emph{primitive} if the greatest common divisor of its
coefficients is $1$.

\item  
The ring of formal Laurent series with integer coefficients is
\[
\mathbb Z((q))
=\left\{
\sum_{n=n_0}^{\infty}a_nq^n:
n_0\in\mathbb Z,\ a_n\in\mathbb Z
\right\}.
\]
Thus an element of $\mathbb Z((q))$ may contain negative powers
of $q$, but only finitely many of them. Addition and multiplication
are defined by the usual formal series operations.
This is a ring, not a field; 
Replacing $\mathbb Z$ by a field $k$ gives the formal Laurent series
field $k((q))$. In particular, we shall use $\mathbb C((q))$
and $\mathbb F_\ell((q))$.

\item
Let $V$ be an indeterminate. The field of rational functions
in $V$ over $\mathbb C$ is the fraction field of the polynomial
ring $\mathbb C[V]$, namely,
\[
\mathbb C(V)
=\operatorname{Frac}(\mathbb C[V])
=\left\{
\frac{P(V)}{Q(V)}:
P(V),Q(V)\in\mathbb C[V],\ Q(V)\neq0
\right\}.
\]
Here $\mathbb C[V]$ consists of all finite sums
$a_0+a_1V+\cdots+a_dV^d$ with $a_i\in\mathbb C$.

\item
Let $\mathbb F_\ell=\mathbb Z/\ell\mathbb Z$.
For $a\in\mathbb Z$, write $\overline a$ for its image
in $\mathbb F_\ell$. Coefficientwise reduction modulo $\ell$
defines a ring homomorphism
\[
\mathbb Z((q))\longrightarrow\mathbb F_\ell((q)),
\qquad
\sum_{n=n_0}^{\infty}a_nq^n
\longmapsto
\sum_{n=n_0}^{\infty}\overline{a_n}q^n.
\]

\end{enumerate}
We first recall some basic facts about modular functions for
finite-index subgroups of $\mathrm{SL}_2(\mathbb Z)$; see
\cite{Apostol1990,Milne2017,Silverman2009} for further details.
The \emph{modular group} is the group of $2\times2$ matrices
with integer entries and determinant $1$:
\[
\Gamma:=\mathrm{SL}_2(\mathbb Z)
=
\left\{
\begin{pmatrix}
a & b\\
c & d
\end{pmatrix}
: a,b,c,d\in\mathbb Z,\ ad-bc=1
\right\}.
\]
We denote the upper half-plane by
\[
\Hh:=\{\tau\in\mathbb C:\operatorname{Im}\tau>0\}.
\]

Let $\gamma=\begin{pmatrix}a &b\\ c &d\end{pmatrix}\in\Gamma$ act on $\tau\in \Hh$ by the linear fractional transformation
\begin{align*}
    \gamma\tau = \frac{a\tau+b}{c\tau+d}, \qquad \text{and} \qquad
    \gamma\infty =\lim_{\tau\rightarrow \infty} \gamma\tau.
\end{align*}

\begin{definition}\label{def:modular-function}
Let $\Gamma'$ be a finite-index subgroup of
$\mathrm{SL}_2(\mathbb Z)$.
A \emph{modular function for $\Gamma'$} is a meromorphic
function $f$ on $\Hh$ satisfying the following two conditions:
\begin{enumerate}
 \item
 $f(\gamma\tau)=f(\tau)$ for every $\gamma\in\Gamma'$.

 \item    for  $\gamma \in \mathrm{SL}_2(\mathbb Z)$, $f(\gamma\tau)$ has a Fourier expansion
of the form
\begin{align*}
 f(\gamma\tau)=\sum_{n=n_\gamma}^\infty a_\gamma(n)q_{w_\gamma}^n, \quad q_{w_\gamma}=e^{2\pi i\tau/w_{\gamma}},
\end{align*}
where $a_\gamma(n_\gamma)\neq 0$ 
and $w_\gamma$ is the minimal positive integer $h$ such that
\begin{align*}
\begin{pmatrix}1 &h\\ 0 &1\end{pmatrix}\in\gamma^{-1}\Gamma'\gamma.
\end{align*}

\end{enumerate}

We denote the set of these functions by $\Mmod(\Gamma')$.
\end{definition}

Note that Condition~(2) in Definition \ref{def:modular-function} means that $f(\tau)$ is meromorphic at every cusp of $\Gamma'$.

 For $\tau\in\mathbb H$, put $q=e^{2\pi i\tau}$.
Recall the Dedekind eta function, the modular discriminant,
and the Eisenstein series of weight $4$:
\begin{align}
 \eta(\tau)
 &=q^{1/24}\prod_{n\geq1}(1-q^n),
 \label{eq:eta}\\
 \Delta(\tau)
 &=\eta(\tau)^{24}
 =q\prod_{n\geq1}(1-q^n)^{24},
 \label{eq:delta}\\
 E_4(\tau)
 &=1+240\sum_{n\geq1}\sigma_3(n)q^n,
 \label{eq:E4}
\end{align}
where
\[
 \sigma_3(n)=\sum_{\substack{d\mid n\\d>0}}d^3.
\]

The discriminant $\Delta$ is a cusp form of weight $12$
for $\mathrm{SL}_2(\mathbb Z)$; see
\cite[Proposition~4.16(c), pp.~54--55,
and Theorem~4.21, p.~57]{Milne2017}.
In particular, it is holomorphic on $\mathbb H$ and satisfies
\begin{equation}\label{eq:drp-transformation}
 \Delta(\gamma\tau)=(c\tau+d)^{12}\Delta(\tau)
 \qquad
 \text{for every }
 \gamma=
 \begin{pmatrix}a&b\\c&d\end{pmatrix}\in\Gamma.
\end{equation}
Moreover, $\Delta(\tau)\neq0$ for every $\tau\in\mathbb H$.

The modular $j$-invariant is defined by
\begin{equation}\label{eq:j}
 j(\tau)=\frac{E_4(\tau)^3}{\Delta(\tau)}.
\end{equation}
Since $E_4$ and $\Delta$ have weights $4$ and $12$,
respectively, their transformation laws give
\[
 j(\gamma\tau)=j(\tau)
 \qquad\text{for every }\gamma\in\Gamma.
\]
The nonvanishing of $\Delta$ on $\mathbb H$ also shows
that $j$ is holomorphic there.
At the cusp $\infty$, it has a simple pole.
Thus $j$ is a modular function for
$\mathrm{SL}_2(\mathbb Z)$; see
\cite[Chapter~2, \S2.4, p.~34]{Apostol1990},
where the normalization is $J=j/1728$.

More explicitly, the Fourier expansion of $j$ is
\begin{equation}\label{eq:j-expansion}
 j(q):=j(\tau)
 =q^{-1}+744+196884q+21493760q^2+\cdots
 \in\mathbb Z((q));
\end{equation}
see \cite[Theorem~1.20 and the subsequent coefficient table,
pp.~21--22]{Apostol1990}, where $j=12^3J$, or
\cite[Appendix~C, \S14, p.~444]{Silverman2009}.

We next recall some terminology from field theory.
Let $E\subseteq F$ be fields, and let $z\in F$.
The element $z$ is called \emph{algebraic over $E$}
if there exists a nonzero polynomial $B(T)\in E[T]$
such that $B(z)=0$.
If no such polynomial exists, then $z$ is called
\emph{transcendental over $E$}.

We write $\mathbb C(j(q))$ for the smallest subfield of
$\mathbb C((q))$ containing $\mathbb C$ and $j(q)$.
Explicitly,
\[
 \mathbb C(j(q))
 =
 \left\{
 \frac{P(j(q))}{Q(j(q))}:
 P(X),Q(X)\in\mathbb C[X],\ Q(X)\neq0
 \right\},
\]
where $\mathbb C[X]$ denotes the ring of polynomials
in an indeterminate $X$ with complex coefficients.
Thus the elements of $\mathbb C(j(q))$ are rational
functions of $j(q)$ with complex coefficients.
Here we identify
$\mathbb C(j(\tau))$ with $\mathbb C(j(q))$
and abbreviate this field to $\mathbb C(j)$.

We shall use the following standard algebraicity theorem;
see \cite[Chapter~V, Section~1, Theorem~1, p.~105]
{Schoeneberg1974}.

\begin{theorem}\label{thm:algebraic}
Let $\Gamma'$ be a finite-index subgroup of
$\mathrm{SL}_2(\mathbb Z)$, and let $\Mmod(\Gamma')$
denote the field of meromorphic modular functions for
$\Gamma'$, meromorphic also at every cusp.
Then every $f\in\Mmod(\Gamma')$ is algebraic over
$\mathbb C(j)$.
\end{theorem}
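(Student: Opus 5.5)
The plan is the classical symmetric-function argument: pass from $\Gamma'$ to the full modular group by taking the characteristic polynomial of $f$ over the cosets. I would then show that modular functions for $\mathrm{SL}_2(\mathbb Z)$ are rational functions of $j$.

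\emph{Step 1: the characteristic polynomial.} Since $[\Gamma:\Gamma']=\mu<\infty$, choose right coset representatives $\gamma_1,\dots,\gamma_\mu$ with $\Gamma=\bigsqcup_i \Gamma'\gamma_i$. For $f\in\Mmod(\Gamma')$ put $f_i(\tau)=f(\gamma_i\tau)$; these do not depend on the choice of representative because $f$ is $\Gamma'$-invariant. Form
\[
 P_f(T)=\prod_{i=1}^{\mu}\bigl(T-f_i(\tau)\bigr)=T^\mu+s_1(\tau)T^{\mu-1}+\cdots+s_\mu(\tau).
\]
For $\gamma\in\Gamma$, right multiplication by $\gamma$ permutes the cosets $\Gamma'\gamma_i$. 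Hence $(f_i(\gamma\tau))_i$ is a permutation of $(f_i(\tau))_i$, and every elementary symmetric function $s_k$ is $\Gamma$-invariant. Each $s_k$ is meromorphic on $\Hh$ as a polynomial in meromorphic functions.

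\emph{Step 2: meromorphy at the cusp.} This is the step needing care. By condition~(2) of Definition~\ref{def:modular-function}, each $f_i$ has an expansion in $q_{w_{\gamma_i}}=e^{2\pi i\tau/w_{\gamma_i}}$ with only finitely many negative powers. Let $N=\operatorname{lcm}_i w_{\gamma_i}$; then every $f_i$, and hence every $s_k$, is a Laurent series in $q^{1/N}$ with finite principal part. Since $s_k(\tau+1)=s_k(\tau)$, only integral powers of $q$ survive. Thus each $s_k$ is a modular function for $\mathrm{SL}_2(\mathbb Z)$ in the sense of Definition~\ref{def:modular-function}, or is identically zero.

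\emph{Step 3: level one.} Show that every $g\in\Mmod(\mathrm{SL}_2(\mathbb Z))$ lies in $\mathbb C(j)$. First, $g$ has only finitely many poles modulo $\Gamma$ in $\Hh$. This follows from compactness of the truncated fundamental domain, together with the fact that $g$ has no poles near $\infty$ thanks to its $q$-expansion. Multiply $g$ by $\prod(j-j(\tau_r))^{m_r}$, taken over these pole orbits, to obtain $h$, which is holomorphic on $\Hh$ and has a pole of some order $M$ at $\infty$. Using $j=q^{-1}+\cdots$, subtract a polynomial $Q(j)$ of degree $M$ that matches the principal part of $h$. The difference is a $\Gamma$-invariant holomorphic function on $\Hh$ that is bounded at $\infty$. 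On the closure of the fundamental domain plus the cusp it attains its maximum modulus, so by the maximum principle it is constant. Therefore $g\in\mathbb C(j)$.

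\emph{Conclusion.} By Steps 1--3, $P_f(T)\in\mathbb C(j)[T]$ is monic, hence nonzero. It vanishes at $T=f$, since the coset $\Gamma'$ itself occurs among the $\Gamma'\gamma_i$ and contributes the factor $T-f$. So $f$ is algebraic over $\mathbb C(j)$. I expect the main obstacle to be Step~3, in particular the finiteness of the poles and the maximum-principle argument on the noncompact fundamental domain.
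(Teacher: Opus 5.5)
Your proposal is correct. The paper gives no proof of its own and simply cites Schoeneberg (Ch.~V, \S1, Thm.~1), and your argument is the standard proof of that result: the elementary symmetric functions of the conjugates $f(\gamma_i\tau)$ over the cosets $\Gamma'\gamma_i$ are level-one modular functions, hence lie in $\mathbb C(j)$, so $f$ is a root of the monic polynomial $\prod_i(T-f(\gamma_i\tau))\in\mathbb C(j)[T]$. Your Steps~2 and~3 correctly supply the details the citation takes for granted, namely the integrality of the $q$-exponents via $\tau\mapsto\tau+1$ and the pole-clearing plus maximum-principle proof that level-one functions lie in $\mathbb C(j)$ (if the maximum modulus is attained only at the cusp, you apply the maximum principle in the $q$-disc instead).
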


We also recall the following descent lemma for algebraic
dependence; see
\cite[Lemma~7.2, p.~533]{AdamczewskiBellDelaygue2019}.

\begin{lemma}\label{lem:algebraic-dependence-descent}
Let $K$ be a subfield of $\mathbb C$, and let $n\geq1$.
Suppose that
\[
 f_1(x),\ldots,f_n(x)\in K[[x]]
\]
are algebraically dependent over $\mathbb C(x)$.
Then they are algebraically dependent over $K(x)$.
\end{lemma}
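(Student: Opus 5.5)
The plan is a linear-algebra descent: clear denominators, split the coefficients of a dependence relation along a $K$-basis, and read off the resulting relations coefficientwise in $x$. I will take a nonzero polynomial relation over $\mathbb C(x)$ and extract from it a nonzero relation with coefficients in $K[x]$. Suppose
\[
 P(Y_1,\ldots,Y_n)=\sum_{\alpha}p_\alpha(x)\,Y^\alpha\in\mathbb C(x)[Y_1,\ldots,Y_n]
\]
is nonzero with $P(f_1,\ldots,f_n)=0$. Here $Y^\alpha=Y_1^{\alpha_1}\cdots Y_n^{\alpha_n}$, and only finitely many $\alpha$ occur.

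\textbf{Clearing denominators.} Multiplying by a common denominator in $\mathbb C[x]\setminus\{0\}$, I may assume every $p_\alpha(x)$ lies in $\mathbb C[x]$. This keeps $P\neq0$ and keeps the relation valid. So $P$ has finitely many complex coefficients $p_{\alpha,k}$, where $p_\alpha(x)=\sum_k p_{\alpha,k}x^k$.

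\textbf{Splitting along a $K$-basis.} Let $W\subseteq\mathbb C$ be the $K$-linear span of the finitely many numbers $p_{\alpha,k}$. It is a finite-dimensional $K$-vector space; choose a basis $e_1,\ldots,e_r$. Writing each $p_{\alpha,k}=\sum_i c^{(i)}_{\alpha,k}e_i$ with $c^{(i)}_{\alpha,k}\in K$ gives
\[
 P=\sum_{i=1}^r e_i\,P_i,\qquad P_i\in K[x][Y_1,\ldots,Y_n].
\]
Since $P\neq0$, at least one $P_i$ is nonzero.

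\textbf{Separating the relation.} Because each $f_j\in K[[x]]$, every $g_i:=P_i(f_1,\ldots,f_n)$ lies in $K[[x]]$, and $\sum_i e_i g_i=0$ in $\mathbb C[[x]]$. Comparing the coefficient of $x^N$ for each $N$ gives $\sum_i e_i\,g_{i,N}=0$ with $g_{i,N}\in K$. Linear independence of the $e_i$ over $K$ then forces $g_{i,N}=0$ for all $i$ and $N$, so every $g_i=0$. Choosing $i$ with $P_i\neq0$ yields a nonzero polynomial over $K[x]\subseteq K(x)$ that vanishes at $(f_1,\ldots,f_n)$. This is the desired algebraic dependence over $K(x)$.

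\textbf{Main obstacle.} The only delicate point is this separation step: one must pass from the relation in $\mathbb C[[x]]$ to coefficients in $K$, and that is valid only because the $f_j$ have coefficients in $K$. Everything else is bookkeeping.
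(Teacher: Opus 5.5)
Your argument is correct and complete. Clearing denominators, splitting the coefficients along a $K$-basis of their finite-dimensional $K$-span, and comparing coefficients of $x^N$ is exactly what is needed. The point you flag (that each $P_i(f_1,\ldots,f_n)$ has all its coefficients in $K$, so linear independence of the $e_i$ applies coefficientwise) is handled properly.

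The paper does not prove this lemma at all; it only cites \cite[Lemma~7.2]{AdamczewskiBellDelaygue2019}. Your basis-splitting argument is the standard proof of that fact.

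What your version adds beyond the citation is self-containedness and flexibility. Take the same argument with $\mathbb C[x]$ replaced by $\mathbb C$, $K=\mathbb Q$, and $K[[x]]$ replaced by $K((q))$. It shows directly that if $h(q),j(q)\in\mathbb Z((q))$ satisfy a nonzero polynomial relation with coefficients in $\mathbb C$, then they satisfy one in $\mathbb Q[U,V]$. That is precisely the form used in the proof of Lemma~\ref{lem:descent}. It is not literally the statement being cited, which concerns power series and yields coefficients in $K(x)$, i.e.\ rational in the series variable rather than in $j$.
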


The following lemma concerns descent to rational
coefficients and reduction modulo a prime.
It will be used in the proof of Theorem~\ref{thm:main}.

\begin{lemma}[Descent and reduction]\label{lem:descent}
Suppose that $h(q)\in\mathbb Z((q))$ is algebraic over
$\mathbb C(j(q))$.
Then there exists a nonzero polynomial
$A(U,V)\in\mathbb Z[U,V]$, whose coefficients have
greatest common divisor $1$, such that
\begin{equation}\label{eq:primitive}
 A(h(q),j(q))=0.
\end{equation}
Moreover, for every prime $\ell$, the coefficientwise
reduction $\overline h(q)$ is algebraic over
$\mathbb F_\ell(\overline j(q))$.
\end{lemma}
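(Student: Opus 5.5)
The plan is to descend the algebraic relation from $\mathbb C$ to $\mathbb Q$, clear denominators to make it primitive over $\mathbb Z$, and then reduce modulo $\ell$.

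\emph{Step 1: descent to $\mathbb Q$.} Since $h(q)$ is algebraic over $\mathbb C(j(q))$, there is a nonzero $B(U,V)\in\mathbb C[U,V]$ with $B(h,j)=0$. We cannot apply Lemma~\ref{lem:algebraic-dependence-descent} directly, because $h$ and $j$ are Laurent series and the lemma is stated over $\mathbb C(x)$ with $x$ a variable. So we rewrite the relation as an algebraic dependence of the two power series $f_1=q^{N}h(q)$ and $f_2=q\,j(q)$, where $N$ is chosen so that $q^Nh\in\mathbb Z[[q]]$. Substituting $h=q^{-N}f_1$ and $j=q^{-1}f_2$ into $B$ and multiplying by a large power of $q$ gives a nonzero polynomial in $\mathbb C[q][T_1,T_2]$ that vanishes at $(f_1,f_2)$. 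It is nonzero because the substitution $U\mapsto q^{-N}T_1$, $V\mapsto q^{-1}T_2$ is injective on monomials up to the $q$-power and is invertible over $\mathbb C(q)$. Lemma~\ref{lem:algebraic-dependence-descent} with $K=\mathbb Q$ then yields a nonzero $R\in\mathbb Q(q)[T_1,T_2]$, which we may take in $\mathbb Q[q][T_1,T_2]$, with $R(f_1,f_2)=0$.

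\emph{Step 2: eliminating $q$.} This is the main obstacle: $R$ involves $q$ as well as $f_1$ and $f_2$, and we must remove it. The variable $q$ itself is algebraic over $\mathbb Q(f_2)=\mathbb Q(j)$. Indeed, in the field $\mathbb Q((q))$ we have $j=q^{-1}+744+\cdots$, so $q$ is the compositional inverse series $q=j^{-1}+\cdots$, but that is not algebraic. The correct route is different: the vanishing of $R$ shows that $f_1$ is algebraic over $\mathbb Q(q,f_2)$, not over $\mathbb Q(f_2)$. So instead we use a transcendence-degree count, which avoids a direct elimination. Over $\mathbb C$, $h$ is algebraic over $\mathbb C(j)$, so $\mathrm{trdeg}_{\mathbb C}\mathbb C(h,j)=1$. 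The standard fact that algebraic dependence of elements of $\mathbb Q((q))$ over $\mathbb C$ implies algebraic dependence over $\mathbb Q$ then gives $\mathrm{trdeg}_{\mathbb Q}\mathbb Q(h,j)=1$, since $j$ is transcendental over $\mathbb Q$. To justify this fact, I would argue directly rather than through Lemma~\ref{lem:algebraic-dependence-descent}. Write $B=\sum_{s}\beta_s M_s(U,V)$ with monomials $M_s$ and coefficients $\beta_s\in\mathbb C$. The condition $B(h,j)=0$ is a homogeneous system of linear equations in the $\beta_s$ with rational coefficients, namely the $q$-coefficients of the $M_s(h,j)$. A linear system over $\mathbb Q$ that has a nonzero complex solution also has a nonzero rational solution. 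This directly gives $A\in\mathbb Q[U,V]\setminus\{0\}$ with $A(h,j)=0$, and makes Step 1 unnecessary. I would present this linear-algebra argument as the proof.

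\emph{Step 3: primitivity and reduction.} We multiply $A$ by a common denominator and divide by the gcd of the coefficients, which gives a primitive $A\in\mathbb Z[U,V]$ satisfying \eqref{eq:primitive}. For a prime $\ell$, reduction is a ring homomorphism $\mathbb Z((q))\to\mathbb F_\ell((q))$, so $\overline A(\overline h,\overline j)=0$, and $\overline A\neq0$ by primitivity. It remains to check that $\overline A(U,\overline j)$ is not the zero polynomial in $U$. Write $\overline A=\sum_i a_i(V)U^i$ with some $a_i\neq0$ in $\mathbb F_\ell[V]$. Since $\overline j=q^{-1}+\cdots$ has a pole, $a_i(\overline j)$ has leading term $c\,q^{-\deg a_i}$ with $c\neq0$, and so $a_i(\overline j)\neq0$. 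Hence $\overline A(U,\overline j)$ is a nonzero polynomial in $\mathbb F_\ell(\overline j)[U]$ that vanishes at $\overline h$, which shows that $\overline h$ is algebraic over $\mathbb F_\ell(\overline j(q))$.
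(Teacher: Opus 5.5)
Your final argument is correct, but your descent step differs from the paper's. The reduction modulo $\ell$ is the same as in the paper: take the primitive part, note $\overline A\neq0$, and get $a_i(\overline j)\neq0$ from the leading term $q^{-\deg a_i}$.

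The paper passes from $\mathbb C(V)$-coefficients to $\mathbb Q(V)$-coefficients by citing Lemma~\ref{lem:algebraic-dependence-descent}. As you observed, that lemma applied in the variable $q$ only yields a relation over $\mathbb Q(q)$. For the citation to give what is claimed, one must re-expand in $x=1/j=q+O(q^2)$. Its compositional inverse lies in $\mathbb Z[[x]]$, so $h\in\mathbb Z((x))$ and $\mathbb C(j)=\mathbb C(x)$. The paper does not spell out this step.

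Your linear-algebra argument proves exactly the needed statement directly, with no external lemma and no change of variable. The $q$-coefficients of the finitely many monomials $h^aj^b$ occurring in $B$ are integers. A homogeneous rational system in finitely many unknowns that has a nonzero complex solution also has a nonzero rational one.

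When you write it up, discard Step~1 and the transcendence-degree detour. In particular, remove the sentence claiming $q$ is algebraic over $\mathbb Q(j)$; it is false, since $q$ is transcendental over $\mathbb C(j)$. Also add one line to handle the infinitely many equations. The rows span a subspace of a finite-dimensional $\mathbb Q$-space, so finitely many rows cut out the kernel. The rank of a rational matrix is the same over $\mathbb Q$ and over $\mathbb C$.
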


\begin{proof}
By the assumed algebraicity, there is a relation
\[
 \sum_{i=0}^{d}r_i(\red{j}(q))h(q)^i=0,
 \qquad r_i(V)\in\mathbb C(V),
\]
Since $h(q)\in\mathbb Z((q))$, by Lemma \ref{lem:algebraic-dependence-descent}, we deduce that there is a relation
\[
 \sum_{i=0}^{d}q_i(\red{j}(q))h(q)^i=0,
 \qquad q_i(V)\in\mathbb Q(V),
\]

  We have proved
the existence of a nonzero polynomial
\[
 B(U,V)\in\mathbb Q[U,V],\qquad B(h(q),\red{j}(q))=0.
\]

Choose a positive integer $m$ clearing the denominators of
all coefficients of $B$, and put $C(U,V)=mB(U,V)$.
Then $C\in\mathbb Z[U,V]$ is nonzero and
$C(h(q),\red{j}(q))=0$.
Let $c>0$ be the greatest common divisor of its coefficients,
and define
\[
 A(U,V)=\frac{C(U,V)}{c}.
\]
Every coefficient of $C$ is divisible by $c$, so
$A\in\mathbb Z[U,V]$. It is nonzero, its coefficients have
greatest common divisor $1$, and it satisfies
\eqref{eq:primitive}. Such a polynomial is called
\emph{primitive}.

Fix a prime $\ell$. Reduction of coefficients defines a ring
homomorphism
\[
 \mathbb Z((q))\longrightarrow\mathbb F_\ell((q)),
 \qquad
 \sum_{n\geq n_0}a_nq^n
 \longmapsto
 \sum_{n\geq n_0}(a_n\bmod\ell)q^n.
\]
This respects multiplication because the coefficient of each
power of $q$ in a product of Laurent series is a finite sum.
Reducing~\eqref{eq:primitive} therefore gives
\begin{equation}\label{eq:desred-reduced}
 \overline A\bigl(\overline h(q),\red{j}(q)\bigr)=0,
\end{equation}
where $\overline A(U,V)\in\mathbb F_\ell[U,V]$ is obtained by
where $\overline A(U,V)\in\mathbb F_\ell[U,V]$ is obtained by
reducing every integer coefficient of $A(U,V)$. Note that 
$\overline A(U,V)\neq0$,  otherwise every coefficient of $A(U,V)$
would be divisible by $\ell$, contradicting primitivity.

We proceed to show that  $\overline A(U, \red{j}(q))$  is a nonzero polynomial. 
Write
\[
 \overline A(U,V)=\sum_{i=0}^{r}a_i(V)U^i,
 \qquad a_i(V)\in\mathbb F_\ell[V].
\]
Since $\overline A\neq0$, then there is at least one $a_i(V)$ is nonzero.
Assume that  
\[
 a_i(V)=a_i^eV^e+\cdots+a_i^0 \in\mathbb F_\ell[V],
\]
the term $a_e( \red{j}(q))^e$ has leading term $a_eq^{-e}$.
Every lower-degree term $a_i(  \red{j}(q))^i$, with $i<e$, has
no power of $q$ as low as $q^{-e}$. Hence $
a_i( \red{j}(q))\neq0.$ Every such nonzero coefficient
remains nonzero after evaluating at $  \red{j}(q)$. Thus
\[
 P_\ell(U):=\overline A(U,  \red{j}(q))
 \in\mathbb F_\ell(  \red{j}(q))[U]
\]
is a nonzero polynomial.
Equation~\eqref{eq:desred-reduced} says precisely that
$P_\ell(\overline h(q))=0$.
Moreover, $P_\ell$ has positive degree in $U$, since a nonzero
constant polynomial cannot vanish.
Therefore $\overline h(q)$ is algebraic over
$\mathbb F_\ell(\red{j}(q))$. 
\end{proof}

 We record a standard fact from field theory; see
\cite[Proposition~1.30, pp.~17--18]{Milne2022} and
\cite[Section~13.2, Theorem~17, p.~526]{DummitFoote2004}.

\begin{lemma}\label{lem:finite-algebraic-adjunction}
Let $L$ be a field, let $\Omega$ be an extension field
of $L$, and let $u_0,\ldots,u_{M-1}\in\Omega$,
where $M\geq1$.
If each $u_t$ is algebraic over $L$, then
\[
 E=L(u_0,\ldots,u_{M-1})
\]
is a finite extension of $L$.
In particular, every element of $E$ is algebraic over $L$.
\end{lemma}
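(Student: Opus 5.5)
The plan is to show that $E$ is obtained from $L$ by a finite tower of simple extensions, each of finite degree, and then use multiplicativity of degrees. Set $E_0=L$ and $E_t=E_{t-1}(u_{t-1})=L(u_0,\ldots,u_{t-1})$ for $1\le t\le M$, so that $E_M=E$ and
\[
 L=E_0\subseteq E_1\subseteq\cdots\subseteq E_M=E\subseteq\Omega .
\]

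The first step is to check that each step of the tower is finite. Since $u_{t-1}$ is algebraic over $L$, there is a nonzero $B(T)\in L[T]\subseteq E_{t-1}[T]$ with $B(u_{t-1})=0$. Hence $u_{t-1}$ is algebraic over $E_{t-1}$. Let $\mu(T)$ be its minimal polynomial over $E_{t-1}$, of degree $d_t\ge1$. Then $\mu$ is irreducible, and the evaluation map $E_{t-1}[T]\to\Omega$, $T\mapsto u_{t-1}$, has kernel $(\mu)$. So its image $E_{t-1}[u_{t-1}]\cong E_{t-1}[T]/(\mu)$ is a field, and it therefore equals $E_{t-1}(u_{t-1})=E_t$. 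Division with remainder by $\mu$ shows that $1,u_{t-1},\ldots,u_{t-1}^{d_t-1}$ span $E_t$ over $E_{t-1}$, giving $[E_t:E_{t-1}]=d_t<\infty$.

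The second step is the tower law. If $\{\alpha_i\}$ is a basis of $E_{t-1}/L$ and $\{\beta_k\}$ is a basis of $E_t/E_{t-1}$, then the products $\{\alpha_i\beta_k\}$ form a basis of $E_t/L$ (spanning and independence are routine). Induction on $t$ then gives
\[
 [E:L]=\prod_{t=1}^{M}d_t<\infty .
\]

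Finally, take any $z\in E$ and put $n=[E:L]$. The $n+1$ elements $1,z,\ldots,z^{n}$ are linearly dependent over $L$. Such a dependence is a nonzero polynomial in $L[T]$ of degree at most $n$ that vanishes at $z$, so $z$ is algebraic over $L$.

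The only point needing care is the first step: identifying $E_{t-1}(u_{t-1})$ with the ring $E_{t-1}[u_{t-1}]$, which relies on irreducibility of the minimal polynomial. Since the statement is standard, one may instead simply cite the references given.
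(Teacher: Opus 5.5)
Your proof is correct and is the standard argument: the tower $L=E_0\subseteq\cdots\subseteq E_M=E$ has finite steps because each $u_{t-1}$ stays algebraic over $E_{t-1}$, the tower law then gives $[E:L]<\infty$, and the linear dependence of $1,z,\ldots,z^{n}$ shows every $z\in E$ is algebraic over $L$. The paper gives no proof of its own and simply cites Milne and Dummit--Foote (Section~13.2, Theorem~17), whose proofs proceed in essentially this way.
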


We conclude this section by stating Voloch's theorem,
which plays a central role in the proof of our main result.
To formulate it, we first introduce the formal power series
\begin{align}
 A_4(q)
 &=-5\sum_{n\geq1}\frac{n^3q^n}{1-q^n},
 \label{eq:A4}\\
 A_6(q)
 &=-\sum_{n\geq1}
   \frac{5n^3+7n^5}{12}\frac{q^n}{1-q^n}.
 \label{eq:A6}
\end{align}
Both series belong to $\mathbb Z[[q]]$.

Consider the Tate curve over $\mathbb Q((q))$ given by
the Weierstrass equation
\begin{equation}\label{tatecurve}
 E_q:\quad y^2+xy=x^3+A_4(q)x+A_6(q).
\end{equation}
The discriminant of this equation is the normalized
modular discriminant $\Delta(q)$, and its $j$-invariant
is the classical series $j(q)$; see
\cite[Appendix~C, Section~14, p.~444]{Silverman2009}.

We now express these invariants in terms of $A_4(q)$
and $A_6(q)$.
For the equation \eqref{tatecurve}, the Weierstrass
coefficients are
\[
 a_1=1,\qquad a_2=a_3=0,\qquad
 a_4=A_4(q),\qquad a_6=A_6(q).
\]
Writing $A_4=A_4(q)$ and $A_6=A_6(q)$ for brevity,
the standard invariant formulas
\cite[Chapter~III, Section~1, p.~42]{Silverman2009}
give
\[
 b_2=1,\qquad
 b_4=2A_4,\qquad
 b_6=4A_6,\qquad
 b_8=A_6-A_4^2,
\]
and
\[
 c_4=b_2^2-24b_4=1-48A_4.
\]
Substituting these expressions into
\[
 \Delta=-b_2^2b_8-8b_4^3-27b_6^2+9b_2b_4b_6,
 \qquad
 j=\frac{c_4^3}{\Delta},
\]
we obtain
\begin{align}
 \Delta(q)
 &=A_4(q)^2-A_6(q)-64A_4(q)^3
   -432A_6(q)^2+72A_4(q)A_6(q),
 \label{eq:invariant-delta}\\
 j(q)
 &=\frac{(1-48A_4(q))^3}{\Delta(q)}.
 \label{eq:invariant-j}
\end{align}
These identities hold in $\mathbb Z((q))$ and remain
valid after coefficientwise reduction modulo any prime.
Indeed, $\Delta(q)=q+O(q^2)$ is a unit in
$\mathbb Z((q))$, and its reduction has leading term
$q$, so it remains nonzero in $\mathbb F_\ell((q))$
for every prime $\ell$.

We can now state Voloch's transcendence theorem.

\begin{theorem}[Voloch]\label{thm:voloch}
Let $\ell$ be a prime, and let
$k=\overline{\mathbb F}_\ell$ be an algebraic closure
of $\mathbb F_\ell=\mathbb Z/\ell\mathbb Z$.
Then the formal parameter $q$ is transcendental over
\[
 k\bigl(\red{A_4}(q),\red{A_6}(q)\bigr)
 \subseteq k((q)),
\]
where the bars denote coefficientwise reduction
modulo $\ell$.
\end{theorem}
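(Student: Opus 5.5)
The statement is Voloch's theorem, which the paper cites rather than proves. If I had to prove it, I would use a differential argument. The goal is to show that the logarithmic derivative of $q$ cannot come from a rational function on any curve over $k$.

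\textbf{Step 1: the field has transcendence degree one.} Put $K=k(\red{A_4}(q),\red{A_6}(q))$. Through \eqref{eq:invariant-delta}--\eqref{eq:invariant-j} and the relations $E_4=1-240A_4$ and $E_6=1+504A_6$ (suitably reorganised integrally for small $\ell$), $K$ is essentially the field generated by the reductions of $E_4$ and $E_6$. The Hasse invariant congruence $E_{\ell-1}\equiv1\pmod\ell$ gives a nontrivial algebraic relation between $\red{E_4}$ and $\red{E_6}$. Hence $K$ has transcendence degree $1$ over $k$ and is the function field of a smooth projective curve $X$, an Igusa-type cover of the $j$-line. Here $\red{j}\in K$ is nonconstant, since it has a pole at $q=0$. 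For $\ell=2,3$ I would instead work directly with $A_4,A_6$ and $\red{\Delta}$, taking $\red{\Delta}$ in place of the Hasse relation.

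\textbf{Step 2: $K$ is stable under $D=q\,d/dq$.} Ramanujan's identities give $DE_4=(E_2E_4-E_6)/3$ and $DE_6=(E_2E_6-E_4^2)/2$. Serre's congruence $E_2\equiv E_{\ell+1}\pmod\ell$ shows that $\red{E_2}$ is a polynomial in $\red{E_4},\red{E_6}$. Hence $D$ preserves $K$, and in particular
\[
D\red{j}=-\red{E_6}\,\red{j}/\red{E_4}\neq0 .
\]
Thus $D$ is a nonzero derivation of $K/k$. Dually, there is a well-defined meromorphic differential $\omega$ on $X$ characterised by $\omega(D)=1$, namely
\[
\omega=\frac{d\red{j}}{D\red{j}}=-\frac{\red{E_4}\,d\red{j}}{\red{E_6}\,\red{j}} .
\]

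\textbf{Step 3: suppose $q$ is algebraic over $K$.} Then $L=K(q)$ is a finite extension, corresponding to a finite cover $Y\to X$. On $L$ the derivation $D$ satisfies $Dq=q$, so $dq/q=\omega$ as differentials on $Y$. A differential of the form $df/f$ has only simple poles, with residues in $\mathbb F_\ell$, and it is fixed by the Cartier operator.

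\textbf{Step 4: contradict this by computing $\omega$ explicitly.} I would compute $\omega=-\red{E_4}\,d\red{j}/(\red{E_6}\,\red{j})$ at the points of $X$ above the supersingular $j$-values, above $j=0,1728$, and at the cusps. The claim to establish is that $\omega$ is not a logarithmic differential, for example by checking that its Cartier image differs from itself, or by finding a pole of order greater than one. I expect the relevant behaviour at the supersingular points, where $\red{E_{\ell-1}}$ vanishes. A cleaner alternative is a Frobenius argument: $\omega$ logarithmic would force $\red{E_2}$ to satisfy an identity contradicting Serre's theory of modular forms mod $\ell$ (the $\theta$-cycle/filtration).

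\textbf{Main obstacle.} The hardest point is Step 4, including handling possible inseparability of $L/K$. An inseparable extension could a priori make $D$ fail to extend uniquely. I would deal with this by passing to $L\cap K^{\mathrm{sep}}$ and using $q^{\ell^r}$, which has the same logarithmic derivative condition up to the factor $\ell^r$. This factor forces $\omega$ to vanish in a way that contradicts $D\red{j}\neq0$.
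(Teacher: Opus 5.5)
The paper does not prove this statement. It quotes it as Theorem~A of Voloch and mentions the automata-theoretic proof of Allouche and Thakur. Your proposal has a genuine gap, and it sits exactly where the content of the theorem lies. Step~4 is left as ``the claim to establish'', and that claim is false: $\omega$ is already a logarithmic differential on $X$.

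To see this, take $\ell\geq5$, so that your Step~2 applies. Write $\overline{\Delta}=\sum_{n\geq1}\delta_nq^n$ with $\delta_n\in\mathbb F_\ell$ and $\delta_1=1$. Since $D$ preserves $K$ and $\overline{\Delta}\in K$, Fermat's little theorem shows that
\[
 f:=\overline{\Delta}-(D-1)^{\ell-1}\overline{\Delta}
 =\sum_{n\equiv1\ (\mathrm{mod}\ \ell)}\delta_n\,q^n=q+\cdots
\]
is a nonzero element of $K$ with $Df=f$. Now $\Omega_{K/k}$ is one-dimensional and $\omega(D)=1=(df/f)(D)$, so $\omega=df/f$ on $X$. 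Equivalently, $C\omega=\omega$ holds automatically, because the Cartier operator is compatible with the embedding $K\subset k((q))$, in which $\omega$ becomes $dq/q$.

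Consequently $\omega$ has only simple poles with residues in $\mathbb F_\ell$ and is Cartier-fixed. None of your Step~4 tests (pole orders, Cartier image, $\theta$-cycles) can produce a contradiction. The relation $dq/q=\pi^*\omega$ from Step~3 says only that $q/f\in\ker D=k((q^\ell))$. It pins down $q$ only modulo $\ell$-th powers, and the algebraic element $f$ itself satisfies it. Everything you use ($D$, Kodaira--Spencer, Cartier) is defined over $K$ and cannot tell $q$ apart from $f$ times an $\ell$-th power.

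Two side remarks. The inseparability worry is moot: $D\overline{j}\neq0$ makes $\overline{j}$ a separating element of $K(q)$, so $K(q)/K$ is automatically separable. Also, the integral relations are $E_4=1-48A_4$ and $E_6=1-72A_4+864A_6$, not the ones you wrote.

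What is missing is an input that sees $q$ itself rather than $dq/q$. One route, in the spirit of the Allouche--Thakur proof, needs only your Step~1.
\begin{itemize}
\item Since $K$ has transcendence degree $1$ over $k$, algebraicity of $q$ over $K$ would make $\overline{\Delta}\in K$ algebraic over $k(q)$.
\item Its coefficients lie in $\mathbb F_\ell$, so it would then be algebraic over $\mathbb F_\ell(q)$. So would $\prod_{n\geq1}(1-q^n)$, which is a $24$th root of $\overline{\Delta}/q$.
\item By Christol's theorem, the coefficients of this product would form an $\ell$-automatic sequence.
\item By Euler's pentagonal number theorem, however, these coefficients are $\pm1$ exactly on the generalized pentagonal numbers and $0$ elsewhere.
\item That set has gaps tending to infinity while ratios of consecutive elements tend to $1$. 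Cobham's gap theorem rules this out for automatic sets.
\end{itemize}
For $\ell=2,3$, Step~1 still needs an actual argument in place of your remark about $\overline{\Delta}$. In characteristic $2$, for instance, one checks that $\overline{A_6}=\overline{A_4}$, so $K=k(\overline{A_4})$.
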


This is Theorem~A of Voloch~\cite{Voloch1996}.
An alternative proof using automata theory was given
by Allouche and Thakur~\cite{AlloucheThakur1999}.

\section{Auxiliary results}

In this section, we establish several auxiliary results
needed for the proof of Theorem~\ref{thm:main}.

\begin{theorem}\label{prop:delta-ratio}
For $u\in\mathbb Q_{>0}$ and $v\in\mathbb Q$, the ratio
\begin{equation}\label{eq:ratio}
 g(\tau):=\frac{\Delta(\tau)}{\Delta(u\tau+v)}
\end{equation}
is algebraic over $\mathbb C(j)$. More precisely, there are an integer
$d\geq1$ and rational functions
$R_0(X),\ldots,R_{d-1}(X)\in\mathbb C(X)$ such that
\[
 g(\tau)^d+
 R_{d-1}(j(\tau))g(\tau)^{d-1}
 +\cdots+R_0(j(\tau))=0.
\]
\end{theorem}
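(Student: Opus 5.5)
Write $u=r/s$ and $v=t/w$ in lowest terms with $r,s\geq1$, and put $\alpha=\begin{pmatrix}u&v\\0&1\end{pmatrix}$, so that $u\tau+v=\alpha\tau$. After scaling by a positive integer we obtain an integer matrix $M=\begin{pmatrix}A&B\\0&D\end{pmatrix}$ with $A,D\geq1$ and $M\tau=\alpha\tau$. The plan is to show that $g$ is a modular function for the finite-index subgroup
\[
\Gamma':=\Gamma\cap M^{-1}\Gamma M,
\]
and then to apply Theorem~\ref{thm:algebraic}. Since $g$ is a root of some nonzero polynomial over $\mathbb C(j)$, dividing by the leading coefficient gives the monic relation with coefficients $R_i(j)$ claimed in the statement.

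\emph{Step 1: $\Gamma'$ has finite index.} This is the standard commensurability fact. The group $\Gamma\cap M^{-1}\Gamma M$ contains the principal congruence subgroup $\Gamma(N)$ with $N=\det M=AD$. Indeed, if $\gamma\equiv I\pmod N$, then $M\gamma M^{-1}=I+M(\gamma-I)M^{-1}$, and $M^{-1}=N^{-1}\operatorname{adj}(M)$ with $\operatorname{adj}(M)$ integral. Hence $M\gamma M^{-1}$ is integral with determinant $1$, so it lies in $\Gamma$. Since $\Gamma(N)$ has finite index in $\Gamma$, so does $\Gamma'$.

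\emph{Step 2: invariance.} Take $\gamma\in\Gamma'$ and set $\gamma'=M\gamma M^{-1}\in\Gamma$, so that $M\gamma\tau=\gamma'M\tau$. By \eqref{eq:drp-transformation},
\[
\Delta(\gamma\tau)=j(\gamma,\tau)^{12}\Delta(\tau),\qquad
\Delta(M\gamma\tau)=\Delta(\gamma'(M\tau))=j(\gamma',M\tau)^{12}\Delta(M\tau),
\]
where $j(\cdot,\cdot)$ denotes the automorphy factor $c\tau+d$ (not the $j$-invariant). The cocycle relation for $\operatorname{GL}_2^+(\mathbb Q)$ gives
\[
j(\gamma',M\tau)\,j(M,\tau)=j(M,\gamma\tau)\,j(\gamma,\tau).
\]
Here $j(M,\tau)=D$ is constant because $M$ is upper triangular, so $j(\gamma',M\tau)=j(\gamma,\tau)$. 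Therefore $g(\gamma\tau)=g(\tau)$. Because $\Delta$ does not vanish on $\mathbb H$, the function $g$ is holomorphic on $\mathbb H$.

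\emph{Step 3: meromorphy at the cusps.} I expect this to be the main point requiring care. Fix $\sigma\in\Gamma$. Then $\Delta(\sigma\tau)=(c\tau+d)^{12}\Delta(\tau)$. For the denominator, $M\sigma$ is an integral matrix of positive determinant, so by the Hermite-form decomposition it can be written as $M\sigma=\sigma_1M_1$ with $\sigma_1\in\Gamma$ and $M_1=\begin{pmatrix}A_1&B_1\\0&D_1\end{pmatrix}$ upper triangular with positive diagonal. This gives
\[
\Delta(M\sigma\tau)=j(\sigma_1,M_1\tau)^{12}\,\Delta(M_1\tau).
\]
Applying the cocycle relation again, the automorphy factors cancel up to the constant $D_1^{12}$, leaving
\[
g(\sigma\tau)=\text{const}\cdot\frac{\Delta(\tau)}{\Delta\bigl((A_1\tau+B_1)/D_1\bigr)}.
\]
Both numerator and denominator are $q$-products of the form $q^{\text{rational}}\cdot(1+\cdots)$, with the second a power series in $e^{2\pi i\tau/D_1}$ times a root of unity. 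So $g(\sigma\tau)$ is a Laurent series in $q^{1/D_1}$ with finite principal part. By invariance under $\sigma^{-1}\Gamma'\sigma$, it is in fact a series in $q_{w_\sigma}$. Hence $g$ is meromorphic at every cusp, $g\in\Mmod(\Gamma')$, and Theorem~\ref{thm:algebraic} finishes the proof.
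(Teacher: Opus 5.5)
Your proof is correct and follows essentially the same route as the paper. Both arguments show $\Gamma(N)\subseteq\Gamma\cap\alpha^{-1}\Gamma\alpha$, check invariance by comparing automorphy factors, and treat each cusp by writing $\alpha\sigma$ (your $M\sigma$) as an element of $\Gamma$ times an upper-triangular matrix, then apply Theorem~\ref{thm:algebraic}. The one difference is in Step~3: you get the expansion in $q_{w_\sigma}$ from periodicity of the Fourier coefficients, whereas the paper checks directly that $w_\sigma s\in\mathbb Z$ via $\beta T^{w_\sigma}\beta^{-1}\in\Gamma$.
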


\begin{proof}
By Theorem \ref{thm:algebraic}, it suffices to show that   $ g(\tau)$ is a modular function for a finite-index subgroup of $\mathrm{SL}_2(\mathbb Z)$. 

Let
\[
 \alpha=\begin{pmatrix}u&v\\0&1\end{pmatrix},
 \qquad
 \Gamma_\alpha=\Gamma\cap\alpha^{-1}\Gamma\alpha.
\]

We aim to show that $ g(\tau)$ is a modular function for $ \Gamma_\alpha$. 
We first verify that $g$ is holomorphic and nowhere zero on
the upper half-plane $\mathbb H$.
 Since $\Delta (\tau)$ is holomorphic and
nowhere zero on $\mathbb H$. Note that $u>0$ and $v\in\mathbb R$, for every
$\tau\in\mathbb H$, $\operatorname{Im}(u\tau+v)
=
u\,\operatorname{Im}\tau>0.$ 
Therefore $u\tau+v\in\mathbb H$,   consequently, $\Delta(u\tau+v)$ is
holomorphic and nowhere zero on $\mathbb H$, so its
reciprocal is holomorphic there.
It follows that $g(\tau)$  
is holomorphic on $\mathbb H$.  

We next show that $\Gamma_\alpha$ is a finite index subgroup of $\Gamma$.  Choose integers $A,D>0$ and $B\in\mathbb Z$ such that
$u=A/D$ and $v=B/D$. Set 
\[
 M=\begin{pmatrix}A&B\\0&D\end{pmatrix}=D\alpha,
 \qquad N=AD=\det M.
\]
Let
\[
 \Gamma(N)=\{\gamma\in G:\gamma\equiv I\pmod N\}
\]
be   the congruence subgroup of $\Gamma$. It is known that  \(\Gamma(N)\) is 
a finite-index subgroup  of $\Gamma$.  We proceed to show that 
$\Gamma(N)\subseteq\Gamma_\alpha$.  
Let $\gamma=I+NX\in\Gamma(N)$, where $X$ is an
integer matrix. Since
$M^{-1}=\operatorname{adj}(M)/N$, we have
\begin{align*}
 \alpha\gamma\alpha^{-1} &=(\frac{1}{D}M)\gamma (DM^{-1})=M\gamma M^{-1}\\
 &=M(I+NX) M^{-1}\\
 &=I+NMxM^{-1}\\
 &=I+NMX\frac{\operatorname{adj}(M)}{N}\\
 &=I+MX\operatorname{adj}(M).
\end{align*}
The matrix on the right has integer entries and determinant
$\det(\gamma)=1$. Hence $\alpha\gamma\alpha^{-1} \in \Gamma$. It yields that $\Gamma(N)\subseteq\Gamma_\alpha$, and so $\Gamma_\alpha$ is a finite index subgroup
of $\Gamma$.

(3) We will show that $g(\tau)$ is $\Gamma_\alpha$-invariant. 
Take
\[
 \gamma=\begin{pmatrix}a&b\\c&d\end{pmatrix}
 \in\Gamma_\alpha,
 \qquad
 \delta=\alpha\gamma\alpha^{-1}=\begin{pmatrix}a_\delta&b_\delta\\c_\delta&d_\delta\end{pmatrix}\in \Gamma.
\]
 
Comparing lower rows in $\delta\alpha=\alpha\gamma$ gives
\[
 c_\delta u=c,\qquad c_\delta v+d_\delta=d,
\]
and therefore
\[
 c_\delta(u\tau+v)+d_\delta=c\tau+d.
\]
Also $\alpha\gamma\tau=\delta(\alpha\tau)$.
Applying~\eqref{eq:drp-transformation} in both the numerator
and denominator, we obtain
\[
 \begin{aligned}
 g(\gamma\tau)
 &=\frac{\Delta(\gamma\tau)}
         {\Delta(\alpha\gamma\tau)}\\
 &=\frac{(c\tau+d)^{12}\Delta(\tau)}
         {(c_\delta(u\tau+v)+d_\delta)^{12}
          \Delta(u\tau+v)}\\
 &=g(\tau).
 \end{aligned}
\]
Thus $g$ is $\Gamma_\alpha$-invariant.

\medskip 

(4) It remains to verify that  $g(\tau)$ is 
meromorphic at  every cusp of $\gamma_\alpha$. 

Fix $\sigma=\begin{pmatrix}
a&b\\
c&d
\end{pmatrix}\in \Gamma$.  Since $\alpha\tau=u\tau+v$ with
$u\in\mathbb Q_{>0}$ and $v\in\mathbb Q$, we have
$\alpha\sigma\infty\in\mathbb Q\cup\{\infty\}$.
In this case, we may choose $\rho\in \Gamma$ such that
$\rho\infty=\alpha\sigma\infty.$ Assume that $\rho=\begin{pmatrix}
A&B\\
C&D
\end{pmatrix}$. Then $\beta=\rho^{-1}\alpha\sigma$ fixes $\infty$, so we may set 
\[
\beta=
\begin{pmatrix}
\lambda&\mu\\
0&\nu
\end{pmatrix},
\qquad
\lambda,\mu,\nu\in\mathbb Q.
\]
Since $\rho,\sigma\in\mathrm{SL}_2(\mathbb Z)$ and
$\det(\alpha)=u$, we have
\[
\lambda\nu
=\det(\beta)
=\det(\rho^{-1})\det(\alpha)\det(\sigma)
=u>0.
\]

Put
\[
s=\frac{\lambda}{\nu}>0,
\qquad
t=\frac{\mu}{\nu}\in\mathbb Q.
\]
Thus $\beta\tau=\frac{\lambda\tau+\mu}{\nu}=s\tau+t$.
 
The fractional linear transformation associated with $\beta$
is therefore
\[
\beta\tau=\frac{\lambda\tau+\mu}{\nu}=s\tau+t.
\]
Since $s>0$ and $t$ is real, we have
\[
\operatorname{Im}(\beta\tau)
=s\operatorname{Im}(\tau)>0
\qquad (\tau\in\mathbb H).
\]
Thus $\beta\tau\in\mathbb H$, so the transformation law for
$\Delta$ can be applied at the point $\beta\tau$.

We next compare the lower rows in the identity
$\rho\beta=\alpha\sigma$.
Direct matrix multiplication gives
\[
\rho\beta
=
\begin{pmatrix}
A&B\\
C&D
\end{pmatrix}
\begin{pmatrix}
\lambda&\mu\\
0&\nu
\end{pmatrix}
=
\begin{pmatrix}
A\lambda&A\mu+B\nu\\
C\lambda&C\mu+D\nu
\end{pmatrix},
\]
whereas
\[
\alpha\sigma
=
\begin{pmatrix}
u&v\\
0&1
\end{pmatrix}
\begin{pmatrix}
a&b\\
c&d
\end{pmatrix}
=
\begin{pmatrix}
ua+vc&ub+vd\\
c&d
\end{pmatrix}.
\]
Equality of these matrices implies
\[
C\lambda=c,
\qquad
C\mu+D\nu=d.
\]
Consequently,
\[
\begin{aligned}
C(\beta\tau)+D
&=C\left(\frac{\lambda\tau+\mu}{\nu}\right)+D\\
&=\frac{C\lambda\tau+C\mu+D\nu}{\nu}\\
&=\frac{c\tau+d}{\nu}.
\end{aligned}
\]

Recall that the weight-$12$ transformation law for $\Delta$
states that
\[
\Delta\left(\frac{az+b}{cz+d}\right)
=(cz+d)^{12}\Delta(z)
\]
for every
$\left(\begin{smallmatrix}a&b\\c&d\end{smallmatrix}\right)
\in\mathrm{SL}_2(\mathbb Z)$ and every $z\in\mathbb H$.
Applying this law to $\sigma$ at $z=\tau$ gives
\[
\Delta(\sigma\tau)
=(c\tau+d)^{12}\Delta(\tau).
\]
Applying it to $\rho$ at $z=\beta\tau$ gives
\[
\begin{aligned}
\Delta(\rho(\beta\tau))
&=(C(\beta\tau)+D)^{12}\Delta(\beta\tau)\\
&=\left(\frac{c\tau+d}{\nu}\right)^{12}
  \Delta(s\tau+t).
\end{aligned}
\]

By the definition
\[
g(z)=\frac{\Delta(z)}{\Delta(\alpha z)}
\]
and the identity $\alpha\sigma=\rho\beta$, we obtain
\begin{equation}\label{eq:drp-cusp-transform}
\begin{aligned}
g(\sigma\tau)
&=\frac{\Delta(\sigma\tau)}
        {\Delta(\alpha\sigma\tau)}=\frac{\Delta(\sigma\tau)}
        {\Delta(\rho(\beta\tau))}\\
&=\frac{(c\tau+d)^{12}\Delta(\tau)}
        {\left((c\tau+d)/\nu\right)^{12}
         \Delta(s\tau+t)}\\
&=\nu^{12}
  \frac{\Delta(\tau)}{\Delta(s\tau+t)}.
\end{aligned}
\end{equation}

Let
\[
T=\begin{pmatrix}1&1\\0&1\end{pmatrix},
\]
and let $h=w_\sigma$ be the least positive integer
such that
\[
T^h\in\sigma^{-1}\Gamma_\alpha\sigma.
\]
Such an integer exists because $\Gamma_\alpha$ has
finite index in $\Gamma$.

We claim that $hs$ is a positive integer.
Indeed, put
\[
\delta=\sigma T^h\sigma^{-1}\in\Gamma_\alpha.
\]
By the definition
$\Gamma_\alpha=\Gamma\cap\alpha^{-1}\Gamma\alpha$, we have
$\alpha\delta\alpha^{-1}\in \Gamma$.
Since $\rho\in \Gamma$, it follows that
\[
\beta T^h\beta^{-1}
=
\rho^{-1}\alpha\delta\alpha^{-1}\rho
\in \Gamma.
\]
Direct matrix multiplication gives
\[
\beta T^h\beta^{-1}
=
\begin{pmatrix}
1&h\lambda/\nu\\
0&1
\end{pmatrix}
=
\begin{pmatrix}
1&hs\\
0&1
\end{pmatrix}.
\]
Every matrix in $\Gamma$ has integer entries, so $hs\in\mathbb Z$.
Since $h>0$ and $s>0$, we may write
\[
m=hs\in\mathbb Z_{>0}.
\]

Now set
\[
q_h=e^{2\pi i\tau/h},
\qquad
\zeta=e^{2\pi it}.
\]
 
Substituting the product expansion of $\Delta$ into
\eqref{eq:drp-cusp-transform}, we obtain
\begin{equation}\label{eq:drp-laurent}
g(\sigma\tau)
=
\nu^{12}\zeta^{-1}q_h^{\,h-m}
\prod_{n=1}^{\infty}
\frac{(1-q_h^{hn})^{24}}
     {(1-\zeta^n q_h^{mn})^{24}}.
\end{equation}

 Consequently, \eqref{eq:drp-laurent} has a convergent
Laurent expansion
\[
g(\sigma\tau)
=
\sum_{n=h-m}^{\infty}a_\sigma(n)q_h^n,
\qquad
a_\sigma(h-m)=\nu^{12}\zeta^{-1}\ne0.
\]
Because $h-m\in\mathbb Z$, only finitely many negative
powers occur. This is precisely condition~(2) of
Definition~\ref{def:modular-function}.
Since $\sigma\in \Gamma$ was arbitrary, the condition holds
at every cusp. Hence, we conclude that $ g(\tau)$ is a modular function for $ \Gamma_\alpha$. 
 
\end{proof}

For $a\geq1$ and $0\leq r<a$, define
\begin{equation}\label{eq:Fr}
 F_r(q)=\sum_{\substack{n\geq0\\n\equiv r\; (\mathrm{mod}\;a)}}
             p(n)q^{24n-1}\in\Z((q)).
\end{equation}
The normalization follows from
\begin{equation}\label{eq:f}
 f(\tau):=\eta(24\tau)^{-1}
          =q^{-1}P(q^{24})
          =\sum_{n\geq0}p(n)q^{24n-1}.
\end{equation}

Set $M=24a$ and $\zeta=e^{2\pi i/M}$. For every integer $b$,
\begin{equation}\label{eq:algsec-orthogonality}
 \frac{1}{M}\sum_{t=0}^{M-1}\zeta^{tb}
 =
 \begin{cases}
  1,&M\mid b,\\
  0,&M\nmid b.
 \end{cases}
\end{equation}

Since replacing $\tau$ by $\tau+t/M$ replaces $q$ by $\zeta^tq$,
equation~\eqref{eq:f} yields
\[
 f\left(\tau+\frac{t}{M}\right)
 =\sum_{n\geq0}p(n)\zeta^{t(24n-1)}q^{24n-1}.
\]
Multiply this identity by $\zeta^{-t(24r-1)}$, average over $t$, and
interchange the finite sum with the convergent series. We obtain
\begin{align*}
 \frac{1}{M}\sum_{t=0}^{M-1}
   \zeta^{-t(24r-1)}f\left(\tau+\frac{t}{M}\right)
 &=
 \sum_{n\geq0}p(n)q^{24n-1}
 \left(\frac{1}{M}\sum_{t=0}^{M-1}\zeta^{24t(n-r)}\right).
\end{align*}
By~\eqref{eq:algsec-orthogonality}, we arrive at 
\begin{equation}\label{eq:algsec-filter}
 F_r(q)=\frac{1}{M}\sum_{t=0}^{M-1}
   \zeta^{-t(24r-1)}f\left(\tau+\frac{t}{M}\right).
\end{equation}

\begin{theorem}\label{prop:algebraic-sections}
Let $\ell$ be any prime and set
\begin{equation}\label{eq:K}
 K_\ell=\F_\ell\bigl(\red{j}(q),\red{\Delta}(q)\bigr)
 \subseteq \F_\ell((q)).
\end{equation} 
Then for every integer $a\geq1$ and every $0\leq r<a$, $\red{F_r}(q)$ is algebraic over $K_\ell$.
\end{theorem}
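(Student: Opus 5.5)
The plan is to show first that each $F_r(q)$, as an element of $\mathbb Z((q))$, is algebraic over $\mathbb C(j(q))$. Lemma~\ref{lem:descent} then gives that $\red{F_r}(q)$ is algebraic over $\mathbb F_\ell(\red{j}(q))\subseteq K_\ell$ for every prime $\ell$.

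So the real content lies in characteristic zero. By \eqref{eq:algsec-filter}, $F_r$ is a finite $\mathbb C$-linear combination of the functions $f(\tau+t/M)$, $0\le t<M$, with $M=24a$. By Lemma~\ref{lem:finite-algebraic-adjunction}, applied with $L=\mathbb C(j)$ inside the field of meromorphic functions on $\Hh$ (or inside $\mathbb C((q^{1/M}))$), it therefore suffices to show that each $f(\tau+t/M)$ is algebraic over $\mathbb C(j)$.

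For this, write $f(\tau+t/M)^{24}=\Delta(24\tau+24t/M)^{-1}=\Delta(24\tau+t/a)^{-1}$. I then factor
\[
 f\Bigl(\tau+\frac tM\Bigr)^{24}=\frac{1}{\Delta(\tau)}\cdot\frac{\Delta(\tau)}{\Delta(24\tau+t/a)} .
\]
The second factor is the ratio $g(\tau)$ of Theorem~\ref{prop:delta-ratio} with $u=24$ and $v=t/a$, so it is algebraic over $\mathbb C(j)$.

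For the first factor, observe that $\Delta(\tau)/\Delta(2\tau)$ is algebraic by the same theorem, but $\Delta$ alone is a weight-$12$ form and not a modular function. One way out is to work over $\mathbb C(j,\Delta)$ instead; this is consistent with the statement, since $K_\ell$ contains $\red{\Delta}$. Thus $f(\tau+t/M)^{24}$ is algebraic over $\mathbb C(j,\Delta)$, and hence so is $f(\tau+t/M)$ (it is a root of $X^{24}-\text{that element}$). Then $F_r$ is algebraic over $\mathbb C(j(q),\Delta(q))$.

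It remains to transfer this to characteristic $\ell$, which needs a two-variable version of Lemma~\ref{lem:descent}. Since $F_r$, $j$, $\Delta$ all lie in $\mathbb Z((q))$, the relation gives a nonzero $B\in\mathbb C[U,V,W]$ with $B(F_r,j,\Delta)=0$. Lemma~\ref{lem:algebraic-dependence-descent} (after multiplying by a power of $q$ to land in power series) lets me take $B\in\mathbb Q[U,V,W]$, then $B\in\mathbb Z[U,V,W]$ primitive, and reducing mod $\ell$ gives $\overline B\neq0$ with $\overline B(\red{F_r},\red{j},\red{\Delta})=0$. The obstacle, and the step I expect to be hardest, is showing that $\overline B(U,\red{j},\red{\Delta})$ is not the zero polynomial in $U$. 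Over $\mathbb C$, $j$ and $\Delta$ are algebraically independent, so the dependence must genuinely involve $U$. Modulo $\ell$, however, the coefficient polynomials in $(V,W)$ might vanish at $(\red{j},\red{\Delta})$.

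To handle this I will choose $B$ of minimal degree in $U$ and argue via Voloch's theorem. Theorem~\ref{thm:voloch} implies that $\red{A_4},\red{A_6}$ are algebraically independent over $\mathbb F_\ell$: a dependence would make $k(\red{A_4},\red{A_6})$ have transcendence degree $1$, and $q$, being transcendental over it, would then be algebraic over a field of transcendence degree $1$ contradicting nothing directly. So instead I would first try the cleaner route of avoiding $\Delta$ altogether, using that $\red{j}$ and $\red{\Delta}$ relate through $\red{A_4},\red{A_6}$ via \eqref{eq:invariant-delta}–\eqref{eq:invariant-j}. Concretely, I would check independence of $\red{j},\red{\Delta}$ by comparing $q$-adic leading terms of monomials $\red{j}^{\,i}\red{\Delta}^{k}$, whose orders are $k-i$; this alone is not enough, so Voloch is genuinely needed here.
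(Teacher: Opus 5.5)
Your proposal has a genuine gap, and it sits exactly at the step you flag as hardest.

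First, the opening plan is false as stated: $F_r$ is \emph{not} algebraic over $\C(j)$. Since $\Delta F_r^{24}$ is algebraic over $\C(j)$ (see below), algebraicity of $F_r$ would make $\Delta$ algebraic over $\C(j)$. Then every $\Delta\circ\gamma=(c\tau+d)^{12}\Delta$, $\gamma\in\mathrm{SL}_2(\Z)$, would be a root of the same minimal polynomial, and there are infinitely many such functions.

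You notice the weight problem and retreat to $\C(j,\Delta)$. But then the reduction to characteristic $\ell$ fails, and the remedy you sketch cannot work, because $\red j$ and $\red\Delta$ are algebraically \emph{dependent} over $\F_\ell$ for every prime $\ell$:
\begin{itemize}
\item for $\ell\in\{2,3,5\}$, the congruence $E_4\equiv1\pmod{240}$ gives $\red j\,\red\Delta=\red{E_4}^3=1$;
\item for $\ell\geq7$, writing $E_{\ell-1}$ as a polynomial in $E_4,E_6$ and reducing, the congruence $E_{\ell-1}\equiv1\pmod\ell$ gives a nontrivial algebraic relation between $\red{E_4}$ and $\red{E_6}$. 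This makes $\red\Delta$ algebraic over $\F_\ell(\red j)$; for $\ell=7$ it is simply $(\red j-1728)\red\Delta=\red{E_6}^2=1$.
\end{itemize}
Voloch's theorem only says that $q$ is transcendental over a field of this kind. It gives no independence statement of the sort you want.

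Concretely, take $\ell\in\{2,3,5\}$ and any primitive relation $B$. Then $B\cdot(VW-1)$ is again primitive by Gauss's lemma, has the same $U$-degree, and vanishes at $(F_r,j,\Delta)$. Yet its reduction vanishes identically at $(U,\red j,\red\Delta)$. So choosing $B$ of ``minimal degree in $U$'' does not save the argument, and you supply no other normalization.

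The missing idea is to keep $\Delta$ attached to $F_r$ in characteristic zero instead of splitting it off. Read your own identity as $\bigl(\eta(\tau)f(\tau+t/M)\bigr)^{24}=\Delta(\tau)/\Delta(24\tau+t/a)$.
\begin{itemize}
\item By Theorem~\ref{prop:delta-ratio}, $u_t=\eta(\tau)/\eta(24\tau+t/a)$ is algebraic over $\C(j)$ alone.
\item Hence so is the $\C$-linear combination $\eta F_r$, and so is its $24$th power $H_r=\Delta F_r^{24}$, which lies in $\Z((q))$.
\item Now the one-variable Lemma~\ref{lem:descent} applies. There, nonvanishing after reduction is automatic because distinct powers of $\red j$ have distinct $q$-orders. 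It yields a nonzero $Q$ over $\F_\ell(\red j)$ with $Q(\red{H_r})=0$.
\item Only at this point is $\red\Delta$ brought in: $\red{F_r}$ is a root of $Q(\red\Delta X^{24})\in K_\ell[X]$, which is nonzero since $\red\Delta=q+O(q^2)\neq0$.
\end{itemize}
This is the paper's argument. It needs no independence statement in characteristic $\ell$ and no appeal to Voloch for this theorem.
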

\begin{proof}
Fix $a$ and $r$. We first show that an integral Laurent series
$H_r(q)=\Delta(q)F_r(q)^{24}$   is algebraic over $\C(\red{j}(q))$. We then reduce a polynomial
relation for $H_r$ modulo $\ell$ and obtain a nonzero polynomial over
$K_\ell$ with root $\red{F_r}(q)$.

 For $0\leq t<M$, define
\begin{equation}\label{eq:algsec-eta-ratio}
 u_t(\tau)
 :=\eta(\tau)f\left(\tau+\frac{t}{M}\right)
 =\frac{\eta(\tau)}{\eta(24\tau+t/a)}.
\end{equation}
Here $24t/M=t/a$. Since $\eta^{24}(\tau)=\Delta(\tau)$, we have
\begin{equation}\label{eq:algsec-discriminant-ratio}
 u_t(\tau)^{24}
 =\frac{\Delta(\tau)}{\Delta(24\tau+t/a)}.
\end{equation}
By Theorem \ref{prop:delta-ratio}, 
 $ u_t(\tau)^{24}$ is algebraic over $\C(j(\tau))$, and so   $u_t(\tau)$ is also algebraic over $\C(j(\tau))$.  
 
 Set $L=\mathbb C(j(\tau))$ and regard all functions as
elements of the field $\mathcal M(\mathbb H)$ of meromorphic
functions on the upper half-plane.
Since each $u_t$ is algebraic over $L$,
Lemma~\ref{lem:finite-algebraic-adjunction} shows that
\[
E=L(u_0,\ldots,u_{M-1})
\]
is a finite extension of $L$ and that every element of $E$
is algebraic over $L$.

The identity
\[
\eta(\tau)F_r(q)
=
\frac{1}{M}\sum_{t=0}^{M-1}
\zeta^{-t(24r-1)}u_t(\tau)
\]
shows that $\eta(\tau)F_r(q)\in E$, because all the
coefficients in this finite sum belong to
$\mathbb C\subseteq L$.
Since $E$ is a field, it also contains
\begin{equation}\label{defi:Hr}
H_r(q)
:=
\bigl(\eta(\tau)F_r(q)\bigr)^{24}
=
\Delta(q)F_r(q)^{24}.
\end{equation}
Therefore $H_r(q)$, viewed as a meromorphic function of
$\tau$, is algebraic over $\mathbb C(j(\tau))$.
   
   Note that 
\[
 F_r(q)\in q^{-1}\Z[[q]],
 \qquad
 \Delta(q)=q\prod_{m\geq1}(1-q^m)^{24}\in q\Z[[q]],
\]
and therefore
\begin{equation}\label{eq:algsec-integrality}
 H_r(q)\in q^{-23}\Z[[q]]\subseteq\Z((q)).
\end{equation}

By Lemma~\ref{lem:descent}, $\red{H_r}(q)$ is algebraic over $\mathbb F_\ell(\overline{j}(q))$, that is there exists 
a nonzero polynomial $Q(V)=b_sV^s+\cdots +b_1V+b_0$, where  $b_i \in \mathbb F_\ell(\overline{j}(q))$ such that  
\begin{equation}\label{eq:algsec-reduced-relation}
 Q\bigl(\red{H_r}(q)\bigr)=0.
\end{equation}

Coefficientwise reduction is a ring homomorphism on $\Z((q))$, thus, 
\eqref{defi:Hr} reduces to
\begin{equation}\label{eq:algsec-reduced-power}
 \red{H_r}(q)
 =\red{\Delta}(q)\,\red{F_r}(q)^{24}.
\end{equation}
  The coefficient of $q$ in $\Delta(q)$ is $1$,
so $\red{\Delta}(q)=q+O(q^2)\neq0$ for every prime $\ell$.

Since $\red{\Delta}(q)\in K_\ell$, the polynomial
\begin{equation}\label{eq:algsec-annihilating-polynomial}
 B(X):=Q(DX^{24})
      =\sum_{i=0}^{d}b_iD^iX^{24i}
      \in K_\ell[X]
\end{equation}
is well defined. Its leading coefficient is $b_dD^d\neq0$,
so it is a nonzero polynomial of degree $24d$. Finally,
\[
 B\bigl(\red{F_r}(q)\bigr)
 =Q\bigl(D\,\red{F_r}(q)^{24}\bigr)
 =Q\bigl(\red{H_r}(q)\bigr)=0.
\]
This is precisely the assertion that $\red{F_r}(q)$ is algebraic
over $K_\ell$.
\end{proof}

Applying Theorem \ref{prop:algebraic-sections} and Voloch's theorem, we derive the following result:

\begin{theorem}\label{cor:transcendence}
For every prime $\ell$, $q$ is transcendental over $K_\ell$ defined
in~\eqref{eq:K}. For any fixed $a\geq1$, it remains transcendental over
\begin{equation}\label{eq:L}
 L_{\ell,a}:=
 K_\ell\bigl(\red{F_0}(q),\ldots,\red{F_{a-1}}(q)\bigr).
\end{equation}
\end{theorem}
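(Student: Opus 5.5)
Since $L_{\ell,a}$ is generated over $K_\ell$ by finitely many elements $\red{F_0}(q),\ldots,\red{F_{a-1}}(q)$, each algebraic over $K_\ell$ by Theorem~\ref{prop:algebraic-sections}, Lemma~\ref{lem:finite-algebraic-adjunction} makes $L_{\ell,a}$ a finite, hence algebraic, extension of $K_\ell$. If $q$ were algebraic over $L_{\ell,a}$, transitivity of algebraicity would make $q$ algebraic over $K_\ell$. So the second assertion follows from the first, and it suffices to prove that $q$ is transcendental over $K_\ell=\F_\ell(\red{j}(q),\red{\Delta}(q))$.

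For the first assertion I compare $K_\ell$ with Voloch's field $k(\red{A_4},\red{A_6})$, where $k=\overline{\F}_\ell$. The identities \eqref{eq:invariant-delta} and \eqref{eq:invariant-j} survive reduction modulo $\ell$:
\[
\red{\Delta}=\red{A_4}^2-\red{A_6}-64\red{A_4}^3-432\red{A_6}^2+72\red{A_4}\,\red{A_6},
\qquad
\red{j}=\frac{(1-48\red{A_4})^3}{\red{\Delta}},
\]
and $\red{\Delta}\neq0$ in $\F_\ell((q))$. Both $\red{\Delta}$ and $\red{j}$ are therefore rational expressions in $\red{A_4},\red{A_6}$ with coefficients in $\F_\ell\subseteq k$. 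Hence, inside $k((q))$,
\[
K_\ell\subseteq k\bigl(\red{A_4}(q),\red{A_6}(q)\bigr).
\]
A nonzero polynomial over $K_\ell$ annihilating $q$ is also a nonzero polynomial over the larger field, which contradicts Theorem~\ref{thm:voloch}. So $q$ is transcendental over $K_\ell$.

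The only point needing care is that all fields involved sit inside one common field, $k((q))$, with $\F_\ell((q))\subseteq k((q))$. This ensures that ``algebraic over'' refers to the same ambient field and that the inclusion of fields is genuine. The reduced identities hold because coefficientwise reduction is a ring homomorphism and $\red{\Delta}$ has leading term $q$, so it is invertible. I expect no real obstacle; the substantive input is Voloch's theorem, which is assumed.
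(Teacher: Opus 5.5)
Your proposal is correct and matches the paper's proof essentially step for step. The paper likewise gets $K_\ell\subseteq k\bigl(\overline{A_4}(q),\overline{A_6}(q)\bigr)$ from the reduced Tate-curve identities, dividing only by $\overline{\Delta}(q)=q+O(q^2)$, and applies Voloch's theorem; it then uses Theorem~\ref{prop:algebraic-sections} and Lemma~\ref{lem:finite-algebraic-adjunction} to get $[L_{\ell,a}:K_\ell]<\infty$ and concludes by the tower law, which is your transitivity step, so the only difference (presenting the two assertions in reverse order) is cosmetic.
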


\begin{proof}
Fix a prime $\ell$ and let $k=\overline{\F}_\ell$. Write
$\mathcal T_\ell=k\bigl(\red{A_4}(q),\red{A_6}(q)\bigr).$ 
By Theorem~\ref{thm:voloch}, it suffices to show that  
\begin{equation}\label{eq:transcor-inclusion}
 K_\ell\subseteq\mathcal T_\ell.
\end{equation}

Coefficientwise reduction modulo $\ell$ of \eqref{eq:invariant-delta}
 and \eqref{eq:invariant-j}
 preserves sums and products.
Consequently,  
\begin{align*}
 \red{\Delta}(q)
 &=(\red{A_4}(q))^2-\red{A_6}(q)-64(\red{A_4}(q))^3
   -432(\red{A_6}(q))^2+72\red{A_4}(q)\red{A_6}(q), \\[5pt]
 \red{\Delta}(q)\red{j}(q)&= {(1-48\red{A_4}(q))^3}.
\end{align*}
where the integer coefficients on the right are interpreted in
$\F_\ell$. The first identity shows that $\red{\Delta}(q)\in\mathcal T_\ell$.
Moreover,   the coefficient of $q$ in $\Delta(q)$ is $1$,
so $\red{\Delta}(q)=q+O(q^2)\neq0$ for every prime $\ell$. Since $\mathcal T_\ell$ is a field, it
contains $(\red{\Delta}(q))^{-1}$, and hence
\[
\red{j}(q)=\frac{(1-48\red{A_4}(q))^3}{\red{\Delta}(q)}
      \in\mathcal T_\ell.
\]
Together with $\F_\ell\subseteq k$, this proves
\[
 K_\ell=\F_\ell\bigl(\red{j}(q),\red{\Delta}(q)\bigr)
      \in\mathcal T_\ell= k\bigl(\red{A_4}(q),\red{A_6}(q)\bigr).
\]
These calculations apply also when $\ell=2$ or $3$. We have reduced
identities with integer coefficients and divided only by the
nonzero series $\red{\Delta}(q)$; no division by an integer divisible by
$\ell$ is involved.

Suppose now that $q$ were algebraic over $K_\ell$. There would
then be a nonzero polynomial $B(T)\in K_\ell[T]$ with $B(q)=0$.
By~\eqref{eq:transcor-inclusion}, the same polynomial belongs to
$\mathcal T_\ell[T]$. It remains nonzero because the inclusion
of fields is injective. This contradicts the transcendence of
$q$ over $\mathcal T_\ell$. Thus $q$ is transcendental over
$K_\ell$.

 Now fix $a\geq1$ and put
\[
L_{\ell,a}
=
K_\ell\bigl(
\overline{F_0}(q),\ldots,\overline{F_{a-1}}(q)
\bigr).
\]
By Theorem~\ref{prop:algebraic-sections}, each
$\overline{F_i}(q)$ is algebraic over $K_\ell$.
Since only finitely many elements are adjoined,
Lemma~\ref{lem:finite-algebraic-adjunction} implies that
$L_{\ell,a}/K_\ell$ is a finite extension.

We now  show that $q$ is transcendental  over
$L_{\ell,a}$.
Suppose, to the contrary, that $q$ is algebraic over
$L_{\ell,a}$.
Then
\[
[L_{\ell,a}(q):L_{\ell,a}]<\infty.
\]
Together with $[L_{\ell,a}:K_\ell]<\infty$, the tower law
gives
\[
[L_{\ell,a}(q):K_\ell]
=
[L_{\ell,a}(q):L_{\ell,a}]
[L_{\ell,a}:K_\ell]
<\infty.
\]
Every element of a finite extension is algebraic over
the base field.
Thus $q$ would be algebraic over $K_\ell$, contradicting
the transcendence established above.
Therefore $q$ remains transcendental over $L_{\ell,a}$.

\end{proof}

\section{Proof of the nonexistence theorem}\label{sec:proof}

For $a\geq1$, put
\begin{equation}\label{eq:block-gf}
 B_a(x)=(x+x^2+\cdots+x^a)P(x)
        =\sum_{N\geq0}b_a(N)x^N.
\end{equation}
Coefficient extraction gives
\begin{equation}\label{eq:block}
 b_a(N)=\sum_{j=1}^{a}p(N-j)
       =c(N)-c(N-a)\qquad(N\geq a).
\end{equation}

For a series $A(x)=\sum_{N\geq0}A_Nx^N$ and $0\leq b<a$, define its
arithmetic section by
\begin{equation}\label{eq:section}
 \mathcal S_{a,b}A(x)
 :=\sum_{\substack{N\geq0\\N\equiv b\; (\mathrm{mod}\;a)}}A_Nx^N.
\end{equation}
This operator keeps the original exponents; it does not replace $an+b$ by $n$.

\begin{lemma}\label{lem:block-polynomial}
Let $\ell$ be a prime and $0\leq b<a$. If there  exists integers
$d$ and $n_0\geq0$ such that $ c(an+b)\equiv d\pmod \ell$ for every $n\geq n_0$, then there exists $R(x)\in\F_\ell[x]$
\begin{equation}\label{eq:polynomial-section}
 \mathcal S_{a,b}{B_a}(x)\equiv R(x) \pmod{\ell }.
 \end{equation}
 
\end{lemma}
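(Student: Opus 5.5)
The plan is to show directly that only finitely many coefficients of $\mathcal S_{a,b}B_a(x)$ are nonzero modulo $\ell$. We take $R(x)$ to be the reduction of the finite truncation of $\mathcal S_{a,b}B_a(x)$ consisting of the terms with $N<a(n_0+1)+b$.

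By definition \eqref{eq:section}, the coefficient of $x^N$ in $\mathcal S_{a,b}B_a(x)$ is $b_a(N)$ when $N\equiv b \pmod a$, and $0$ otherwise. So the only coefficients that need attention are those with $N=an+b$ for some $n\geq 0$. For $n\geq 1$ we have $N=an+b\geq a$, and the identity \eqref{eq:block} gives
\[
b_a(an+b)=c(an+b)-c(a(n-1)+b).
\]

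Now suppose $n\geq n_0+1$. Then both $n$ and $n-1$ are at least $n_0$, so the hypothesis gives $c(an+b)\equiv d$ and $c(a(n-1)+b)\equiv d \pmod\ell$. Subtracting, $b_a(an+b)\equiv 0\pmod\ell$ for every $n\geq n_0+1$. Consequently every coefficient of $\mathcal S_{a,b}B_a(x)$ with exponent $N\geq a(n_0+1)+b$ vanishes modulo $\ell$.

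It follows that the coefficientwise reduction of $\mathcal S_{a,b}B_a(x)$ equals the reduction of the finite sum
\[
\sum_{\substack{0\leq n\leq n_0}} b_a(an+b)\,x^{an+b},
\]
which is a polynomial $R(x)\in\F_\ell[x]$. This is exactly \eqref{eq:polynomial-section}.

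The only step that needs care is using \eqref{eq:block}, which is valid only for $N\geq a$. That condition holds because we applied it only for $n\geq1$; the index $n=0$, along with all the other small indices, is simply absorbed into $R(x)$. There is no real obstacle; the argument is a direct telescoping observation.
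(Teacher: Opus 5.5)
Your proposal is correct and follows essentially the same telescoping argument as the paper: $b_a(an+b)=c(an+b)-c(a(n-1)+b)\equiv 0\pmod\ell$ for all large $n$, so the section reduced modulo $\ell$ has only finitely many nonzero coefficients. You are somewhat more explicit than the paper, specifying the threshold $n\geq n_0+1$ and checking that \eqref{eq:block} is applied only when $N\geq a$.
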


\begin{proof}
For all sufficiently large $n$, both $c(an+b)$ and $c(a(n-1)+b)$ have the
same residue. Equation~\eqref{eq:block} therefore gives
$b_a(an+b)\equiv0\pmod\ell$. Only finitely many coefficients of the section
can be nonzero, so the section is a polynomial.
\end{proof}

\begin{proof}[Proof of Theorem~\ref{thm:main}]
Suppose~\eqref{eq:forbidden} holds. Choose a prime $\ell\mid m$ and reduce
the congruence modulo $\ell$. Write $b=at+b_0$, with $0\leq b_0<a$.
Replacing $b$ by $b_0$ only changes the initial point of the progression,
so the resulting sequence is still eventually constant. We may therefore
assume $0\leq b<a$.

Write
\[
 \overline P(x)
 :=\sum_{n\geq0}\overline{p(n)}x^n
 \in\mathbb F_\ell[[x]].
\]
By \eqref{eq:R} and the linearity of the arithmetic
section operator, we have
\[
S_{a,b}\overline B_a(x)
 =\mathcal S_{a,b}
   \left(\left(\sum_{j=1}^{a}x^j\right)\overline P(x)\right)
 =\sum_{j=1}^{a}
   \mathcal S_{a,b}\bigl(x^j\overline P(x)\bigr).
\]

Fix $j\in\{1,\ldots,a\}$. The definition of $\mathcal S_{a,b}$ gives
\[
 \mathcal S_{a,b}\bigl(x^j\overline P(x)\bigr)
 =
 \sum_{\substack{N\geq j\\N\equiv b\;(\mathrm{mod}\;a)}}
 \overline{p(N-j)}x^N.
\]
Now set $n=N-j$. Then $N\geq j$ is equivalent to
$n\geq0$, and
\[
 N\equiv b\pmod a
 \quad\Longleftrightarrow\quad
 n+j\equiv b\pmod a
 \quad\Longleftrightarrow\quad
 n\equiv r_j\pmod a.
\]
Consequently,
\[
 \mathcal S_{a,b}\bigl(x^j\overline P(x)\bigr)
 =
 \sum_{\substack{n\geq0\\n\equiv r_j\;(\mathrm{mod}\;a)}}
 \overline{p(n)}x^{n+j}.
\]
Summing over $j$, we obtain
\begin{equation}\label{eq:S}
S_{a,b} \overline B_a(x)
 =
 \sum_{j=1}^{a}
 \sum_{\substack{n\geq0\\n\equiv r_j\;(\mathrm{mod}\;a)}}
 \overline{p(n)}x^{n+j}.
\end{equation}

By Lemma~\ref{lem:block-polynomial},
\begin{equation}\label{eq:R}
 \mathcal S_{a,b}\red B_a(x)=R(x)
 \qquad\text{for some }R(x)\in\F_\ell[x].
\end{equation}
 Substituting $x=q^{24}$
in~\eqref{eq:R} and applying \eqref{eq:S} consequently gives
\begin{align}
 R(q^{24})
 &=\sum_{j=1}^{a}
       \sum_{\substack{n\geq0\\n\equiv r_j\; (\mathrm{mod}\;a)}}
       \red{p(n)}q^{24(n+j)}\notag\\
       &=\sum_{j=1}^{a}q^{24j+1}
       \sum_{\substack{n\geq0\\n\equiv r_j\; (\mathrm{mod}\;a)}}
       \red{p(n)}q^{24n-1}\notag\\
 &=\sum_{j=1}^{a}q^{24j+1}\red F_{r_j}(q).
 \label{eq:contradiction-series}
\end{align}
where $\red F_{r_j}(q)$ is defined in \eqref{eq:Fr}. 
 
 Let $L=L_{\ell,a}$ be the field defined in \eqref{eq:L},
and let $X$ be an indeterminate over $L$.
Since $\red{F_{r_j}}(q)\in L$ for every $j$ and
$R(x)\in\mathbb F_\ell[x]$, the expression
\begin{equation}\label{eq:Q}
 Q(X)
 :=\sum_{j=1}^{a}\red{F_{r_j}}(q)X^{24j+1}
   -R(X^{24})
\end{equation}
is a polynomial in $L[X]$.
Here the series $\red{F_{r_j}}(q)$ are regarded as
coefficients, while $X$ is the polynomial variable.
Substituting $X=q$ and using
\eqref{eq:contradiction-series}, we obtain
\[
 Q(q)
 =\sum_{j=1}^{a}q^{24j+1}\red{F_{r_j}}(q)
  -R(q^{24})
 =0.
\]

We now show that $Q(X)$ is not the zero polynomial.
Evidently,  for each $j$, the coefficient of
$X^{24j+1}$ in $Q(X)$ is exactly
$\red{F_{r_j}}(q)$: no other term in \eqref{eq:Q}
contributes to this coefficient.

It therefore suffices to show that at least one of
the series $\red{F_{r_j}}(q)$ is nonzero.
Since the integers $1,\ldots,a$ represent all residue
classes modulo $a$ exactly once, there is a unique
$j^*\in\{1,\ldots,a\}$ such that
$ j^*\equiv b\pmod a.$ 
By the definition of $r_j$, we have
$ r_{j^*}\equiv b-j^*\equiv0\pmod a.
$ 
As $0\leq r_{j^*}<a$, it follows that $r_{j^*}=0$.

By the definition of $F_0(q)$,
\begin{equation}\label{eq:nonzero}
 \red{F_0}(q)
 =\sum_{m\geq0}\red{p(am)}q^{24am-1}
 =q^{-1}
  +\sum_{m\geq1}\red{p(am)}q^{24am-1}.
\end{equation}
The coefficient of $q^{-1}$ is
$\red{p(0)}=\red{1}=1$, which is nonzero in
$\mathbb F_\ell$ for every prime $\ell$.
All terms with $m\geq1$ have exponent strictly greater
than $-1$, so none can cancel this term.
Thus $\red{F_0}(q)\neq0$.

Therefore the coefficient of $X^{24j^*+1}$ in $Q(X)$
is the nonzero element $\red{F_0}(q)$.
This proves that $Q(X)\neq0$.
Together with $Q(q)=0$, this contradicts the
transcendence of $q$ over $L$.

We have obtained a nonzero polynomial in $L[X]$ with root $q$, contradicting
Theorem~\ref{cor:transcendence}. This proves the theorem.
\end{proof}

 \vskip 0.2cm
\noindent{\bf Acknowledgment.} This work
was supported by   the National Science Foundation of China.  

 \vskip 0.2cm
 \noindent{\bf Declaration of AI usage.}
During the preparation of this work, the author used AI
to explore proof strategies and to assist with language
editing and proofreading. The author takes full responsibility
for the final content of the publication.

\end{document}